\documentclass[12pt]{article}

\usepackage{amsmath}
\usepackage{amsfonts}
\usepackage{amssymb}
\usepackage{amscd}
\usepackage{amsthm}
\usepackage{bbm}
\usepackage{color}      
\usepackage[linktocpage=true,plainpages=false,pdfpagelabels=false]{hyperref}

\input xypic

\usepackage[matrix,arrow,curve]{xy}

\usepackage{color}

\frenchspacing
\mathsurround=2pt
\emergencystretch=5pt
\tolerance=400
\voffset=-2cm
\hoffset=-17mm
\textwidth=16cm
\textheight=640pt
\righthyphenmin=2
\makeatletter
\makeatother

\newtheorem{prop}{Proposition}
\newtheorem{nt}{Remark}
\newtheorem{Th}{Theorem}
\newtheorem{lemma}{Lemma}

\newfont{\ssdbl}{msbm8}
\newfont{\sdbl}{msbm9}
\newfont{\dbl}{msbm10 at 12pt}
\newcommand{\eqdef}{\stackrel{\rm def}{=}}

\newcommand{\oo}{{\cal O}}

\newcommand{\Dim}{\mathop {\rm Dim}}

\newcommand{\Lim}{\mathop {\rm lim}}

\newcommand{\Spec}{\mathop {\rm Spec}}

\newcommand{\Frac}{\mathop {\rm Frac}}

\newcommand{\da}{\mathbb{A}}

\newcommand{\dr}{\mathbb{R}}

\newcommand{\Z}{\dz}

\newcommand{\lrto}{\longrightarrow}

\newcommand{\df}{\mathbb{F}}

\def\R{{\mathbb R}}
\def\Z{{\mathbb Z}}
\def\Q{{\mathbb Q}}

\newcommand{\diag}{{\rm diag}}

\newcommand{\mC}{{\mathcal C}}
\newcommand{\mD}{{\mathcal D}}

\begin{document}

\author{
D. V. Osipov
}

\title{Second Chern numbers of vector bundles and higher adeles\footnotetext{This work is supported by the Russian Science Foundation  under  grant 14-50-00005.}}
\date{}

\maketitle

\begin{abstract}
We give a construction of the second Chern number of a vector bundle over a smooth projective surface by means of adelic transition matrices for the vector bundle. The construction does not use an algebraic $K$-theory and depends on the canonical $\Z$-torsor of a locally linearly compact $k$-vector space. Analogs of certain auxiliary results for the case of an arithmetic surface are also discussed.
\end{abstract}

\section{Introduction}

In~\cite{Par} A.N.~Parshin constructed Chern classes of vector bundles on a scheme $Y$ which is finite type over the field $\Q$ using higher adeles. In particular,
Chern classes, which he constructed, were in $H^m (Y, \Omega_Y^m)$. Taking the higher residues when $m = \dim Y$, we obtain the Chern numbers,
see~\cite[\S~4.3]{Par}.  This construction can be carried out when $Y$ is a scheme over any field $k$, but because of the higher residues the values of the Chern numbers of vector bundles
will be in the image of the ring $\Z$ in the field $k$. Thus, if $\mathop{\rm char} k =p >0$, then we will obtain the Chern numbers only modulo $p$.

Much later
there appeared adelic constructions of second Chern classes on certain two-dimensional regular schemes be means of $K_2$-groups. In particular,
R.Ya.~Budylin in~\cite{Bu} constructed the second Chern classes of vector  bundles of rank $2$ on a smooth algebraic surface $Y$ over any perfect field using $K_2$-groups of rational adeles on $Y$. Besides, T.~Chinburg, G.~Pappas and M. J.~Taylor gave in~\cite{CPT} a construction of the second Chern classes of vector bundles of arbitrary rank on a regular two-dimensional scheme $Y$ with  projective morphism of relative dimension $1$ to the spectrum of a Dedekind ring
by means of $K_2$-adeles on $Y$ originated from~\cite{O0}.

In this paper we provide a quite elementary construction of the second Chern numbers of vector bundles on a smooth projective surface $X$ over a perfect field $k$. This construction does not use algebraic $K$-theory, but uses only $\Z$-torsors and central extensions of a group $GL_n(\da_X)$
by the group $\Z$, where $\da_X$ is the adelic ring of $X$, which is also called the Parshin-Belinson adeles of $X$.

More exactly,  any locally linearly compact vector space over a field $k$ gives a canonical $\Z$-torsor of dimension theories. The adelic space $\da_X$ has a filtration given by divisors on $X$ with the quotient spaces being locally linearly compact vector spaces over $k$. The same is also true for $\da_X^n$ for any integer $n \ge 0$. Therefore from the action of the group $GL_n(\da_X)$ on the $k$-vector space $\da_X^n$ we obtain a canonical central extensions
$\widetilde{GL_n(\da_X)}$ and then  $\widehat{GL_n(\da_X)}$
of this group by the group $\Z$. The trivializations of a vector bundle at scheme points of $X$ give transition matrices which are elements of $GL_n(\da_X)$ and satisfy the cocycle condition. Using  canonical splittings of the central extension  $\widehat{GL_n(\da_X)}$ over certain subgroups
of $GL_n(\da_X)$, we obtain lifts of these transition matrices to $\widehat{GL_n(\da_X)}$, where their product is an element over $1 \in GL_n(\da_X)$, i.e. it belongs to the subgroup $\Z$.
This is the second Chern number of the vector bundle, see Theorem~\ref{Chern}.

The advantage of our approach is similarity to the constructions from~\cite{OsiPa}, where an "analytic" proof of the Riemann-Roch theorem for linear bundles on a smooth projective surface $X$ over a finite field was given. One of the main ingredients in this proof was the definition of the intersection index of two divisors on $X$
via the commutator of lifts of certain elements from the group $\da_X^*$ to a central extension which is similar to the central extension
$\widetilde{GL_1(\da_X)}$.  We note that the Noether formula was not obtained in~\cite{OsiPa}. Therefore one of the first expected applications of our construction of the second Chern numbers will be the proof of the Noether formula in the spirit of~\cite{OsiPa}.

The next direction for the applications is the transfer of our constructions to the case of an arithmetic surface such that the fibres over Archimedean points of the base are taken into account. In particular,
 in the case of an arithmetic surface $X$ over $\Spec \Z$ and the adelic ring $\da_X^{\rm ar}$ which
includes an adelic object of the fibre over $\infty$-point of $\Spec \Z$,
 we prove  in this paper in Proposition~\ref{arithm}  splittings of central extensions  $\widetilde{GL_n(\da^{\rm ar}_X)}$ and $\widehat{GL_n(\da^{\rm ar}_X)}$ over certain subgroups of $GL_n(\da^{\rm ar}_X)$.
  These splittings are analogs of splittings considered above for the construction of second Chern number of a vector bundle over  an algebraic surface. The central extensions  $\widetilde{GL_n(\da^{\rm ar}_X)}$ and $\widehat{GL_n(\da^{\rm ar}_X)}$ are central extensions  by the group of positive real numbers $\dr_+^*$ and were also  considered in~\cite{Osi2}.

 The paper is organized as follows. In Section~\ref{adeles} we recall certain facts on the Parshin-Beilinson adeles of an algebraic surface $X$.
 In section~\ref{dimens} we recall the notion of $\Z$-torsor of dimension theories for a locally linearly compact $k$-vector space.
 In section~\ref{centr-ext1} we give a construction of the central extension $\widetilde{GL_n(\da_{\Delta})}$, where $\da_{\Delta}$
 is the adelic ring which depends on a subset  $\Delta$ of all pairs $x \in C$, where $x$ is a point and $C$ is an irreducible curve on $X$.
   In section~\ref{sec_comm} we connect the commutator of lifts of elements from $\da_{X}^*$ to $\widetilde{GL_1(\da_X)}$ with the intersection index of divisors on $X$ by proving a result which was given without proof in~\cite{OsiPa}, see Proposition~\ref{intersect}. In section~\ref{centr-ext2}
   we give a construction of the central extension~$\widehat{GL_n(\da_{\Delta})}$ and prove some properties of this central extension, see Proposition~\ref{ext-restr}. In section~\ref{can-split} we prove canonical splittings of the central extensions  $\widetilde{GL_n(\da_X)}$ and  $\widehat{GL_n(\da_X)}$   over certain subgroups, see Proposition~\ref{split-ext}. In section~\ref{Chern-sect} we give a construction of the second Chern number, see Theorem~\ref{Chern}. In section~\ref{arsur} we prove certain results on splittings of central extensions in the case of an arithmetic surface, see Proposition~\ref{arithm}.

\section{Central extension and intersection index of divisors} \label{sec2}
\subsection{Parshin-Beilinson adeles}  \label{adeles}

Let $X$ be a smooth algebraic surface over a perfect field $k$.
Let $\da_X$ be the Parshin-Beilinson adelic ring of $X$ (see, for example,  a survey in~\cite{Osi}).

Let $x \in C$ be a  pair, where $x$ is a  point on $X$, and $C$ is an irreducible curve on $X$ such that $C$ contains $x$. Let $K_{x,C}= \prod_{i=1}^l K_i$, where an index $i$ corresponds to a formal irreducible  branch $C_i$ of the curve $C$ in the formal neighbourhood of $x$
(i.e. $C \mid_{\Spec \hat{\oo}_{x,X}} = \bigcup\limits_{i=1}^l C_i$, where $\hat{\oo}_{x,X}$ is the completion of the local ring $\oo_{x,X}$ of $x$ on $X$), and $K_i$ is a two-dimensional local field that is the completion of the  fraction field $\Frac \hat{\oo}_{x,X}$ with respect to the discrete valuation given by $C_i$.

We note that
\begin{equation}  \label{adel}
\da_X   \subset \prod_{x \in C} K_{x,C}  \mbox{,}
\end{equation}
where the product is over all pairs $x \in C$ described as above.

Let $\Delta$ be a subset in the set of all pairs $x \in C$ described as above. There are the following subrings of the ring $\prod_{x \in C} K_{x,C}$:
\begin{equation} \label{adel-delta}
\da_{\Delta}= \da_X   \cap \prod_{ \{x \in C\} \in \Delta } K_{x,C} \mbox{,}  \qquad \oo_{\da_{\Delta}}  = \da_X \cap \prod_{ \{x \in C\} \in \Delta } \oo_{K_{x,C}}  \mbox{,}
\end{equation}
where $\oo_{K_{x,C}} = \prod_{i=1}^l  \oo_{K_i}$, and $\oo_{K_i}$ is the discrete valuation ring of the field $K_i$. Clearly, if $\Delta$ is the set of all pairs $x \in C$, then $\da_{\Delta}= \da_X$. Moreover, if $\Delta$ is a single pair $x \in C$, then $\da_{\Delta}=K_{x,C}$.

Let $D = \sum_{i} a_i C_i$ be a divisor on $X$. (Here  $a_i \in \Z$ and $C_i$ is an irreducible curve on $X$ for any $i$). We call
$a_i = \nu_{C_i} (D)$ for any $i$. We define
$$
\oo_{\da_{\Delta}}(D) =\da_X \cap \prod_{ \{x \in C\} \in \Delta } t_C^{-\nu_C(D)} \oo_{K_{x,C}} \mbox{,}
$$
where $t_C = 0$ is an equation of an irreducible curve $C$ on some open subset of $X$. (The definition of $\oo_{\da_{\Delta}}(D)$ does not depend on the choice of $t_C$.)

We note (see~\cite[prop.~2.1.5]{H}) that if $\Delta = \Delta_1 \cup \Delta_2$ and $\Delta_1 \cap \Delta_2 = \emptyset$, then
$$
\da_{\Delta} = \da_{\Delta_1}  \times \da_{\Delta_2} \mbox{,}   \qquad    \oo_{\da_{\Delta}}=   \oo_{\da_{\Delta_1}}  \times \oo_{\da_{\Delta_2}}  \mbox{.}
$$
Hence we obtain  for any integer $n \ge 1$
\begin{equation}  \label{decomp}
GL_n(\da_{\Delta})= GL_n(\da_{\Delta_1})  \times GL_n(\da_{\Delta_2})  \mbox{.}
\end{equation}

\subsection{Dimension theories}  \label{dimens}

Our first goal  is to construct  central extensions $\widetilde{GL_n(\da_{\Delta})}$ and $\widehat{GL_n(\da_{\Delta})}$  of the group $GL_n(\da_{\Delta})$
by the group $\Z$. These central extensions are similar to central extensions $\widetilde{GL_n(\da_{\Delta})}_{\dr_+^*}$ and $\widehat{GL_n(\da_{\Delta})}_{\dr_+^*}$
from~\cite[\S~3]{Osi2}. (More close relation will be given in Section~\ref{arsur} below.) The main tool for this construction is a $\Z$-torsor $\Dim$ of dimension theories on a locally linearly compact $k$-vector space $V$ (or, in other words, on $1$-Tate $k$-vector space~$V$). This $\Z$-torsor was defined by M.~Kapranov in~\cite{K}.

We recall the definition of $\Dim(V)$.  A dimension theory $d$ on $V$ is a map from the set of all open linearly compact $k$-subspaces of $V$ to the group $\Z$ such that
$d(U_2)= d(U_1) + \dim_k(U_2/U_1)$ whenever $U_2 \supset U_1$ are two open linearly compact $k$-subspaces of $V$. (We note that $\dim_k(U_2/ U_1) < \infty$.) The set of all dimension theories on $V$ is denoted by $\Dim(V)$. The group $\Z$ acts on $\Dim(V)$ by adding constant maps. This makes $\Dim(V)$ into a $\Z$-torsor.

We consider an exact sequence of $k$-vector spaces
\begin{equation}  \label{ex_seq}
0 \lrto V_1 \lrto V_2 \lrto V_3  \lrto 0 \mbox{,}
\end{equation}
where $V_i$ ($1 \le i \le 3$) are locally linearly compact $k$-vector spaces and all the maps in the above sequence are continuous. Besides, let $V_1$ be a closed subspace of $V_2$, and topology on $V_3$ coincides with  the quotient topology. In this case, there is a canonical isomorphism
\begin{equation} \label{dim-ext}
\Dim (V_1)  \otimes_{\Z}  \Dim(V_3)  \lrto \Dim(V_2)
\end{equation}
given as $d_1 \otimes d_3 \mapsto d_2$, where $d_2(U)= d_1(U \cap V_1) + d_3 (U/ (U \cap V_1))$ for a linearly compact subspace $U$ of $V_2$.

\subsection{Central extension $\widetilde{GL_n(\da_{\Delta})}$} \label{centr-ext1}

By construction, $$\da_{\Delta} = \mathop{\Lim_{\lrto}}_{D_1} \mathop{\Lim_{\longleftarrow}}_{D_2 \le D_1}  \oo_{\da_{\Delta}}(D_1)/ \oo_{\da_{\Delta}}(D_2)    \mbox{,}$$
and the $k$-vector space $\oo_{\da_{\Delta}}(D_1)/ \oo_{\da_{\Delta}}(D_2)  $ is a locally linearly compact $k$-vector space for any divisors $D_2 \le D_1$ on $X$. Besides, for any divisors $D_1 \ge D_2 \ge D_3$ on $X$  the corresponding exact sequence
$$
0 \lrto \oo_{\da_{\Delta}}(D_2)/ \oo_{\da_{\Delta}}(D_3)  \lrto \oo_{\da_{\Delta}}(D_1)/ \oo_{\da_{\Delta}}(D_3) \lrto \oo_{\da_{\Delta}}(D_1)/ \oo_{\da_{\Delta}}(D_2)  \lrto 0
$$
has the same properties as the exact sequence~\eqref{ex_seq}. This means that $\da_{\Delta}$, and correspondingly $\da_{\Delta}^n$, is a complete $C_2$-vector space over $k$ (or a $2$-Tate vector space over $k$) from~\cite{Osip}. In particular, for any elements $g_1 $ and $g_2$ from $GL_n(\da_{\Delta})$ such that $g_1 \oo_{\da_{\Delta}}^n  \subset g_2 \oo_{\da_{\Delta}}^n$ we have that the $k$-vector space
$g_2 \oo_{\da_{\Delta}}^n  / g_1 \oo_{\da_{\Delta}}^n$ is a locally linearly compact with the  induced and quotient topology  from
 a locally linearly compact $k$-vector space $\oo_{\da_{\Delta}}(D_1)^n / \oo_{\da_{\Delta}}(D_2)^n$ for appropriate divisors $D_1 \ge D_2$ on $X$.    Therefore a $\Z$-torsor
\begin{equation}  \label{dim1}
\Dim(g_1\oo_{\da_{\Delta}}^n \mid g_2\oo_{\da_{\Delta}}^n )  \eqdef \Dim( g_2 \oo_{\da_{\Delta}}^n  / g_1 \oo_{\da_{\Delta}}^n )
\end{equation}
is well-defined. We define also
\begin{equation}   \label{dim2}
\Dim(g_2\oo_{\da_{\Delta}}^n \mid g_1\oo_{\da_{\Delta}}^n )  \eqdef \Dim( g_2 \oo_{\da_{\Delta}}^n  / g_1 \oo_{\da_{\Delta}}^n )^{\vee}  \mbox{,}
\end{equation}
where the sign $\vee$ means the dual $\Z$-torsor.
Now for any elements $g_1$ and $g_2$ from $GL_n(\da_{\Delta})$ a $\Z$-torsor $\Dim(g_1\oo_{\da_{\Delta}}^n \mid g_2\oo_{\da_{\Delta}}^n)$ is canonically defined by the following property (using that there is an element $g_3 $ from $GL_n(\da_{\Delta})$
such that $g_3 \oo_{\da_{\Delta}}^n  \subset g_i \oo_{\da_{\Delta}}^n$, where $i=1$ and $i=2$). For any elements $g_1, g_2, g_3$ from $GL_n(\da_{\Delta})$ there is a canonical isomorphism of $\Z$-torsors
\begin{equation}   \label{dim3}
\Dim(g_1\oo_{\da_{\Delta}}^n \mid g_2\oo_{\da_{\Delta}}^n ) \otimes_{\Z}  \Dim(g_2\oo_{\da_{\Delta}}^n \mid g_3\oo_{\da_{\Delta}}^n )
\lrto \Dim(g_1\oo_{\da_{\Delta}}^n \mid g_3\oo_{\da_{\Delta}}^n )  \mbox{.}
\end{equation}
Any element $g$ from $GL_n(\da_{\Delta})$ defines an isomorphism of $\Z$-torsors for any elements $g_1, g_2$ from $GL_n(\da_{\Delta})$:
$$\Dim(g_1\oo_{\da_{\Delta}}^n \mid g_2\oo_{\da_{\Delta}}^n ) \lrto \Dim(gg_1\oo_{\da_{\Delta}}^n \mid gg_2\oo_{\da_{\Delta}}^n )
\quad
\mbox{,}
\qquad  \mbox{where} \quad
d \longmapsto g(d)  \mbox{.}
 $$

We obtain a central extension
\begin{equation}  \label{ext-1}
0 \lrto \Z \lrto \widetilde{GL_n(\da_{\Delta})}  \stackrel{\theta}{\lrto} GL_n(\da_{\Delta})  \lrto 1  \mbox{,}
\end{equation}
where the group $\widetilde{GL_n(\da_{\Delta})}$ is defined as the set of all pairs $(g,d)$, where $g \in GL_n(\da_{\Delta})$ and
$d \in \Dim(\oo_{\da_{\Delta}}^n \mid g\oo_{\da_{\Delta}}^n ) $,  with the multiplication law
given as
$$
(g_1, d_1)(g_2, d_2)= (g_1 g_2, d_1 \otimes g_1(d_2))  \mbox{,}
$$
and $\theta((g,d)) = g$.

 The following lemma is an important property which follows from the construction and formulas~\eqref{decomp}  and~\eqref{dim-ext} (compare also  with the proof of~\cite[Prop.~2]{Osi2}).
\begin{lemma}   \label{lem:Baer}
If $\Delta = \Delta_1 \cup \Delta_2$ such that $\Delta_1 \cap \Delta_2 = \emptyset$, then
the central extension  $\widetilde{GL_n(\da_{\Delta})}$ is the Baer sum (i.e. it corresponds to the sum of $2$-cocycles)
of central extensions $p_1^* \widetilde{GL_n(\da_{\Delta_1})}$ and $p_2^*\widetilde{GL_n(\da_{\Delta_1})} $, where $p_1$ and $p_2$
are projections in decomposition~\eqref{decomp}.
\end{lemma}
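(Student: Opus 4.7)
The plan is to construct a canonical isomorphism of central extensions of $GL_n(\da_{\Delta})$ by $\Z$ between $\widetilde{GL_n(\da_{\Delta})}$ and the Baer sum of $p_1^* \widetilde{GL_n(\da_{\Delta_1})}$ and $p_2^* \widetilde{GL_n(\da_{\Delta_2})}$. Via the decomposition \eqref{decomp} and the splitting $\oo_{\da_{\Delta}}^n = \oo_{\da_{\Delta_1}}^n \oplus \oo_{\da_{\Delta_2}}^n$, the Baer sum admits a very concrete description: over $g = (g^{(1)}, g^{(2)}) \in GL_n(\da_{\Delta_1}) \times GL_n(\da_{\Delta_2})$, its fibre is the $\Z$-torsor
$$\Dim(\oo_{\da_{\Delta_1}}^n \mid g^{(1)} \oo_{\da_{\Delta_1}}^n) \otimes_{\Z} \Dim(\oo_{\da_{\Delta_2}}^n \mid g^{(2)} \oo_{\da_{\Delta_2}}^n),$$
by the standard identification of the Baer sum of two central extensions by $\Z$ with the tensor product of their underlying $\Z$-torsor fibres. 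Thus it suffices to exhibit, naturally in $g$, an isomorphism $\Phi_g$ of this torsor with $\Dim(\oo_{\da_{\Delta}}^n \mid g\oo_{\da_{\Delta}}^n)$ and then to check that the family $\{\Phi_g\}_g$ is multiplicative.

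For the construction of $\Phi_g$, I would choose $g_3 = (g_3^{(1)}, g_3^{(2)}) \in GL_n(\da_{\Delta})$ so that $g_3^{(i)} \oo_{\da_{\Delta_i}}^n$ lies in both $\oo_{\da_{\Delta_i}}^n$ and $g^{(i)} \oo_{\da_{\Delta_i}}^n$, and apply the canonical isomorphism \eqref{dim-ext} to the two split short exact sequences
$$0 \lrto g^{(1)} \oo_{\da_{\Delta_1}}^n / g_3^{(1)} \oo_{\da_{\Delta_1}}^n \lrto g \oo_{\da_{\Delta}}^n / g_3 \oo_{\da_{\Delta}}^n \lrto g^{(2)} \oo_{\da_{\Delta_2}}^n / g_3^{(2)} \oo_{\da_{\Delta_2}}^n \lrto 0$$
and its analogue with $\oo_{\da_{\Delta}}^n$ in place of $g\oo_{\da_{\Delta}}^n$. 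Combining the two resulting torsor isomorphisms via the composition \eqref{dim3} produces $\Phi_g$, and independence of the choice of $g_3$ follows from the associativity of \eqref{dim-ext} applied to refinements of the compact sublattice.

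The main obstacle is to verify multiplicativity of $\{\Phi_g\}_g$, i.e.\ that $\Phi_{g_1 g_2}$ agrees with $\Phi_{g_1} \otimes g_1(\Phi_{g_2})$ under \eqref{dim3}; this is exactly what is needed for $\Phi_\bullet$ to yield a homomorphism of central extensions with the multiplication law displayed after \eqref{ext-1}. The check reduces to naturality of \eqref{dim-ext} with respect to the morphism of split short exact sequences induced by left multiplication by $g_1$, which preserves the direct sum decomposition because under \eqref{decomp} the element $g_1 = (g_1^{(1)}, g_1^{(2)})$ acts componentwise, combined with the compatibility of \eqref{dim3} with direct sums. The resulting diagram chase is in the spirit of \cite[Prop.~2]{Osi2}; once the direct-sum compatibility is unpacked, the verification is essentially routine, but keeping track of the four different triples of lattices coming from the two exact sequences and their refinements is where the bookkeeping is most delicate.
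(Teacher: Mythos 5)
Your proposal is correct and follows exactly the route the paper indicates: the paper gives no detailed argument, stating only that the lemma ``follows from the construction and formulas~\eqref{decomp} and~\eqref{dim-ext}'', and your fibrewise identification of the Baer sum with the tensor product of $\Dim$-torsors via the split exact sequences and \eqref{dim-ext}, followed by the multiplicativity check, is precisely the intended filling-in of that sketch. (Note in passing that the statement's ``$p_2^*\widetilde{GL_n(\da_{\Delta_1})}$'' is a typo for $\Delta_2$, which you have silently and correctly repaired.)
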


\subsection{Commutator of the lift  of elements and intersection index}   \label{sec_comm}
Using central extension~\eqref{ext-1} when $n=1$, for arbitrary elements $f,g$ from $\da_{\Delta}^*$ we define an element from $\Z$:
$$
\langle f,g \rangle_{\Delta}  \eqdef [\tilde{f},\tilde{g}]= \tilde{f}\tilde{g}\tilde{f}^{-1}\tilde{g}^{-1}   \mbox{,}
$$
where elements $\tilde{f}, \tilde{g}$ are from $\widetilde{GL_1(\da_{\Delta})}$ such that $\theta(\tilde{f})=f$ and $\theta(\tilde{g})=g$. The element $\langle f,g \rangle_{\Delta}$ does not depend on the choice of $\tilde{f}, \tilde{g}$.
The map $\langle \cdot, \cdot \rangle_{\Delta} $ is a bilinear and alternating map from $\da_{\Delta}^*  \times \da_{\Delta}^*$ to $\Z$.
From Lemma~\ref{lem:Baer} we have the following property (under conditions and notations of this lemma):
\begin{equation}  \label{dir_sum}
\langle f, g \rangle_{\Delta} =\langle p_1(f), p_1(g)   \rangle_{\Delta_1} +  \langle p_2(f), p_2(g)   \rangle_{\Delta_2}   \mbox{.}
\end{equation}
If $\Delta$ coincides with the set of all pairs $x \in C$ on $X$, then we will use also notation $\langle \cdot, \cdot \rangle_X$
for the map  $\langle \cdot, \cdot \rangle_{\Delta}$.

Let $K=k'((u))((t))$ be a two-dimensional local field, where  $k' \supset k$ is a finite extension of fields . By $\nu_K(\cdot, \cdot) : K^* \times K^* \to \Z $
we denote a bilinear and alternating map given as
\begin{equation}  \label{nu}
\nu_K(f,g) \eqdef  [k':k]  \cdot \nu_{\bar{K}} \left( \pi(f^{\nu_K(g)}{g^{-\nu_K(f)}}) \right) \mbox{,}
\end{equation}
where $f,g \in K^*$, the maps $\nu_K : K^* \to \Z$ and $\nu_{\bar{K}} : \bar{K}^*=k'((u))^*  \to \Z$ are discrete valuations,
and $\pi : \oo_K  \to \bar{K}$ is the natural homomorphism.
\begin{nt} {\em
There is another explicit formula for the expression $\nu_K(f,g)$  given as the product of the number $[k':k]$ and the determinant of $2 \times 2$-matrix of discrete valuations of rank $2$ for the elements $f$ and $g$.
See this and another properties of the map $\nu_K(\cdot, \cdot)$ in~\cite[\S~2.2]{Par} and, for example, in~\cite[\S~8.1]{GO}.
}
\end{nt}

For $K_{x,C} = \prod_{i=1}^l K_i$, where $K_i$ is a two-dimensional local field, we define a map $\nu_{x,C} : K_{x,C}^* \times K_{x,C}  \to \Z$ as
\begin{equation}  \label{nu-x-C}
\nu_{x,C}(f,g)  \eqdef \sum_{i=1}^l \nu_{K_i}(f_i, g_i )  \mbox{,}
\end{equation}
where $f,g $ are from $K_{x,C}^*$, and  $f_i, g_i$ are corresponding  projections of elements $f,g $ from $K_{x,C}^*$ to $K_i^*$.

\begin{prop}  \label{prop:symbol}
\begin{enumerate}
\item Let $\Delta$ be a single pair $x \in C$. In this case $\langle \cdot ,  \cdot  \rangle_{\Delta} = -\nu_{x,C}(\cdot, \cdot)$
\item For any set $\Delta$ of pairs $x \in C$ (as in the beginning of the paper) we have
\begin{equation} \label{sum}
\langle f ,  g  \rangle_{\Delta} = \sum_{ \{x\in C\} \in \Delta} \langle f_{x,C} ,  g_{x,C}  \rangle_{x \in C}  \mbox{,}
\end{equation}
where $f,g $ are from $ \da_{\Delta}^*$, the elements $f_{x,C}, g_{x,C}$ are corresponding  projections of elements $f,g  $ from  $\da_{\Delta}^*$
to $K_{x,C}^*$ (see formulas~\eqref{adel} and~\eqref{adel-delta}), and the sum in formula~\eqref{sum} contains only a finite number of non-zero terms.
\end{enumerate}
\end{prop}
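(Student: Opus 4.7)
The plan is to handle the two parts in sequence, reducing each to an explicit computation via Baer-sum decompositions.

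For part 1, first reduce from $K_{x,C} = \prod_{i=1}^l K_i$ to the case of a single two-dimensional local field: the decomposition $\oo_{K_{x,C}} = \prod_i \oo_{K_i}$ together with~\eqref{dim-ext} gives, by the same argument as Lemma~\ref{lem:Baer}, that $\widetilde{GL_1(K_{x,C})}$ is the Baer sum of the $\widetilde{GL_1(K_i)}$; taking commutators and using~\eqref{nu-x-C} reduces the claim to $\langle f,g\rangle_K = -\nu_K(f,g)$ for $K = k'((u))((t))$ alone. Both sides are bilinear and alternating on $K^*$, and both vanish when $f,g \in \oo_K^*$: for $\nu_K$ immediately from~\eqref{nu}, and for $\langle\cdot,\cdot\rangle_K$ by taking lifts $\tilde f = (f,0)$, $\tilde g = (g,0)$ with $0 \in \Dim(\oo_K \mid \oo_K) \cong \Z$ (canonical because $f\oo_K = g\oo_K = \oo_K$) and noting that multiplication by a unit preserves the trivial dimension theory on the zero space. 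Using the filtration $K^* \supset \oo_K^* \supset 1 + t\oo_K$ and bilinearity, one reduces to checking the identity at $(t,u)$. For $d_0 \in \Dim(k'((u)))$ with $d_0(u^n k'[[u]]) = -n[k':k]$, the pushforward by $u$ gives $u_* d_0 = d_0 + [k':k]$, so multiplication by $u$ shifts $\Dim(k'((u)))$ by $+[k':k]$, and hence the dual torsor $\Dim(\oo_K \mid t\oo_K) = \Dim(k'((u)))^{\vee}$ by $-[k':k]$. Expanding the multiplication law in $\widetilde{GL_1(K)}$ yields $\tilde t\,\tilde u = [k':k]\cdot\tilde u\,\tilde t$, i.e.\ $\langle t,u\rangle_K = [k':k] = -\nu_K(t,u)$.

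For part 2, first establish the finiteness of the sum. By part 1, a local symbol $\langle f_{x,C}, g_{x,C}\rangle_{x\in C}$ vanishes whenever both $f_{x,C}, g_{x,C}$ lie in $\oo_{K_{x,C}}^*$; for adelic $f,g \in \da_\Delta^*$, nontrivial local valuations occur only on a finite set $S$ of ``bad'' curves, and for each $C \in S$ the local symbol at $(x,C)$ reduces via~\eqref{nu} to the order at $x$ of the branch-restriction of $f^{\nu_C(g)} g^{-\nu_C(f)}$, a rational function on $C$ with only finitely many zeros and poles. For the equality, decompose $\Delta = \Delta_u \sqcup \Delta_0$ where $\Delta_u$ collects those pairs with $f,g \in \oo_{K_{x,C}}^*$. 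Lemma~\ref{lem:Baer} gives $\langle f,g\rangle_\Delta = \langle p_u(f), p_u(g)\rangle_{\Delta_u} + \langle p_0(f), p_0(g)\rangle_{\Delta_0}$; the first summand vanishes by the unit argument from part 1. Since $\Delta_0$ is supported over the finite set $S$, further decompose $\Delta_0 = \bigsqcup_{C \in S} \Delta_0^C$ and apply Lemma~\ref{lem:Baer} iteratively to reduce to a single $C$. For each $C$, split $\Delta_0^C$ into the finite subset $F_C$ of pairs with nonzero local symbol and its complement: iterated Lemma~\ref{lem:Baer} on $F_C$ gives the desired finite sum, while the remainder symbol $\langle p(f), p(g)\rangle_{\Delta_0^C \setminus F_C}$ vanishes by a one-dimensional (Weil-type) reciprocity on the projective curve $C$.

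The main technical obstacle is the careful sign tracking in part 1 --- specifically, the minus sign arising from duality in~\eqref{dim2} and its interaction with the shift of $\Dim(k'((u)))$ under multiplication by $u$ --- which determines the overall $-$ in the identity $\langle\cdot,\cdot\rangle_{\Delta} = -\nu_{x,C}(\cdot,\cdot)$. In part 2, the delicate step is the vanishing of the remainder symbol on each cofinite $\Delta_0^C \setminus F_C$, which translates classical Weil reciprocity for the curve $C$ into the 2-dimensional central-extension formalism of this paper.
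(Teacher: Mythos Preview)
Your part 1 follows the paper's route (it cites~\cite{O} for the single-field case); the explicit check at $(t,u)$ is in the right spirit, but your reduction is incomplete: vanishing on $\oo_K^*\times\oo_K^*$ does not by itself control $\langle t,\oo_K^*\rangle_K$, so you still need the case $\langle t,b\rangle_K=0$ for $b\in\oo_K^*$ with $\bar b\in k'[[u]]^*$. This is easy (such $b$ preserves the standard lattice in $\oo_K/t\oo_K$), but it must be said.

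The real gap is in part 2, at the step where you invoke ``Weil-type reciprocity on the projective curve $C$'' to kill $\langle p(f),p(g)\rangle_{\Delta_0^C\setminus F_C}$. None of the hypotheses of any reciprocity law are present: $X$ is not assumed projective in this proposition (so $C$ need not be projective), $\Delta$ is an \emph{arbitrary} set of pairs (so $\Delta_0^C\setminus F_C$ need not be cofinite in $C$), and $f,g$ are adeles, not elements of $k(C)^*$. More importantly, on $\Delta_0^C\setminus F_C$ the local components $f_{x,C},g_{x,C}$ are by construction \emph{not} in $\oo_{K_{x,C}}^*$ (those pairs were already absorbed into $\Delta_u$), so the unit-splitting argument is unavailable. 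What you are asserting --- that the global symbol over this subset vanishes because every local symbol does --- is precisely the proposition itself restricted to $\Delta_0^C\setminus F_C$; the argument is circular. A related sloppiness: you write $\nu_C(f),\nu_C(g)$ as if they were single integers, but for an adele the valuation $\nu_{K_{x,C}}(f_{x,C})$ may depend on $x$.

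The paper avoids this entirely. After reducing to a single curve $C$, it uses the adelic conditions on $f,f^{-1},g,g^{-1}$ to see that $\nu_{K_{x,C}}(f_{x,C})$ and $\nu_{K_{x,C}}(g_{x,C})$ assume only finitely many values as $x$ ranges over smooth points, subdivides $\Delta$ accordingly, and on each piece uses bilinearity and the alternating property to reduce to the two cases $f,g\in\oo_{\da_\Delta}^*$ (both sides trivially zero) and $f\in\oo_{\da_\Delta}^*$, $g=t_C$. In the second case both sides are computed directly: the left-hand side as the shift $d(\pi(f)^{-1}U)-d(U)$ of a dimension theory on the one-dimensional adelic quotient $\oo_{\da_\Delta}/t_C\oo_{\da_\Delta}$, which matches $\sum_x\nu_{x,C}(f_{x,C},t_C)$. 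No reciprocity is used.
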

\begin{proof}

1. Let $K_{x,C} = \prod_{i=1}^l K_i$, where $K_i$ is a two-dimensional local field. Using a direct analog of formula~\eqref{dir_sum} we reduce the statement to the following: $\langle f_i,g_i \rangle_{K_i} = -\nu_{K_i} (f_i, g_i)$, where the map $\langle \cdot, \cdot \rangle_{K_i}$ is constructed by the central extension which is  obtained as the  restriction  of the central extension $\widetilde{GL_1(K_{x,C})}$ from the group $GL_1(K_{x,C})$
to the subgroup $GL_1(K_i)$. Now this statement follows from Theorem~1 of~\cite{O}. (We note that there is a misprint with the sign in the statement and in the last line of the proof of Theorem~1 from~\cite{O}.)

2. Let  $\Delta_2$ be the set of all pairs $x \in C$ from $\Delta$ such that $f_{x,C} \oo_{K_{x,C}} = \oo_{K_{x,C}}$
and $g_{x,C} \oo_{K_{x,C}} = \oo_{K_{x,C}}$. Let $\Delta_1 $ be the complement set to $\Delta_1$ inside  the set $\Delta$.
By construction, the central extensions $\widetilde{GL_1(\da_{\Delta_2})}$ and $\widetilde{GL_1(K_{x,C})}$, where $\{x \in C\} \in \Delta_2$, split.
Therefore $\langle f_{\Delta_2} , g_{\Delta_2} \rangle_{\Delta_2} =0$, where $f_{\Delta_i}, g_{\Delta_i}$ ($i=1,2$) are corresponding projections of
elements $f,g$ from $\da_{\Delta}^*$ to $\da_{\Delta_i}^*$, and $\langle f_{x,C}, g_{x,C} \rangle_{x,C}=0 $ when $\{x \in C \} \in \Delta_2$.
Besides, from formulas~\eqref{nu} and~\eqref{nu-x-C} it follows that $\nu_{x,C}(f_{x,C}, g_{x,C})=0$ when $\{x \in C \} \in \Delta_2$.

Therefore from formula~\eqref{dir_sum} we obtain $\langle f ,  g  \rangle_{\Delta} = \langle f_{\Delta_1} ,  g_{\Delta_1}  \rangle_{\Delta_1}$. Thus we can change $\Delta$
to $\Delta_1$ in formula~\eqref{sum}. From construction of the set $\Delta_1$ we have that the  set of irreducible curves $C$  which appear in  pairs $x \in C$ from $\Delta_1$ is finite. Again by formula~\eqref{dir_sum} we can restrict ourself to a fixed irreducible curve $C$, i.e. we consider  a set $\Delta$ such that a curve $C$ is fixed for pairs $x \in C$ from $\Delta$.

Since $f \in \da_{\Delta}$ and $f^{-1} \in \da_{\Delta}$, from adelic conditions we obtain that there is a finite set of  integers such that
$\nu_{K_{x,C}}(f_{x,C})$ belongs to this set when $x$ runs over all smooth points on $C$ from pairs $\{x \in C \} \in \Delta$. (If $x$ is a smooth point on $C$, then $K_{x,C}$ is a two-dimensional local field with the discrete valuation $\nu_{K_{x,C}}$.) The same is true for the element $g \in \da_{\Delta}^*$, but with possibly another finite set. Therefore, subdividing the set $\Delta$ into a finite number of subsets and using formula~\eqref{dir_sum} we will suppose that $\Delta$  satisfies conditions of one of   the following two cases.
 In the former case, the set $\Delta$ consists of one pair $x \in C$ (when $x$ is a singular point on $C$), and therefore  formula~\eqref{sum}
 is tautological and we will not consider this case further. In the remaining case,
   the integers $\nu_{K_{x,C}}(f_{x,C})$  and  $\nu_{K_{x,C}}(g_{x,C})$ do not change when   $x$  runs over all smooth points on $C$ such that $\{ x \in C \} \in \Delta $.

Let $t_C=0$ be  an equation of the irreducible curve $C$ on some open subset of $X$. Then using bilinear and alternating property of both hand sides of formula~\eqref{sum}, and also the above properties of the set $\Delta$, we obtain that it is enough to consider two cases: 1) $f$ and $g$
are from $\oo_{\da_{\Delta}}^*$; 2) $f \in \oo_{\da_{\Delta}}^*$ and $g = t_C$. In the first case, $f_{x,C}$ and $g_{x,C}$ are from $\oo_{K_{x,C}}^*$
for all pairs $x \in C$ from $\Delta$. Therefore, by construction, central extensions $\widetilde{GL_1(\da_{\Delta})}$ and $\widetilde{GL_1(K_{x,C})}$, where $\{x \in C \} \in \Delta $, split. Hence $\langle f , g \rangle_{\Delta} =0$ and
$\langle f_{x,C}, g_{x,C} \rangle_{x,C}=0 $ when $\{x \in C \} \in \Delta$, and formula~\eqref{sum} follows. In the second case,
the right hand side of formula~\eqref{sum} equals to $\sum_{\{x \in C \} \in  \Delta} - \nu_{x,C} (f_{x,C}, t_C)$ by the first statement of this proposition, and this sum contains only a finite number of non-zero terms by formulas~\eqref{nu} and~\eqref{nu-x-C}  and adelic conditions on $f$.
On the other hand, by definition of $\langle \cdot, \cdot \rangle_{\Delta}$ we have   $\langle f, t_C^{-1}  \rangle_{\Delta} =  d(\pi(f)^{-1} (U) - d(U)$, where $\pi$ is the natural homomorphism $\oo_{\da_{\Delta}}  \to \oo_{\da_{\Delta}}/ t_C \oo_{\da_{\Delta}}$,  $d$   is a dimension theory  on $\oo_{\da_{\Delta}}/ t_C \oo_{\da_{\Delta}}$ and $U$ is an open linearly compact $k$-subspace in  $\oo_{\da_{\Delta}}/ t_C \oo_{\da_{\Delta}}$. (Compare with the calculation of case 2 in the proof of Theorem~1 of~\cite{O}.) Fixing  an open set $U$ as the product of rings of integers of one-dimensional local fields, and   dimension theory $d$ such that $d(U)=0$, it is easy to see that
$ d(\pi(f)^{-1} (U) - d(U) = \sum _{\{x \in C \} \in  \Delta}  \nu_{x,C} (f_{x,C}, t_C)$. Thus we obtain formula~\eqref{sum} in this case.
\end{proof}

\medskip

For a surface $X$, an irreducible curve $C \subset X$, and  a point $x \in X$, let $K_C$ be the  completion
of the field  $k(X)$ of rational functions on $X$ with respect to the discrete valuation given by $C$, let $K_x = k(X) \cdot \hat{\oo}_{x,X}$ be a subring of the fraction field  $\Frac \hat{\oo}_{x,X}$.

Let $D$ be a divisor on $X$.

For an irreducible curve $C \subset X$ let $j^D_C \in K_C^*$ be an equation of the divisor $D$ after the restriction to $\Spec K_C$. For any point $y \in C$ we have an embedding $K_C  \subset K_{y,C}$. It is easy to check that a collection $\{ j^D_C \}$, where $C$ runs over the set of all irreducible curves on $ X$, defines a well-defined element from $\da_X^*$ under the natural diagonal embedding $\prod_{C \subset X} K_C \hookrightarrow \prod_{y \in C } K_{y,C }$.

For a point $x \in X$ let $j^D_x \in K_x^*$ be an equation of the divisor $D$ after the restriction to $\Spec K_x$. For any irreducible curve $E \ni x$ we have an embedding $K_x \subset K_{x,E}$. It is easy to check that a collection $\{ j^D_x \}$, where $x$ runs over the set of all  points of $ X$, defines a well-defined element from $\da_X^*$ under the natural diagonal embedding
${\prod_{x \in X} K_x \hookrightarrow \prod_{x \in E } K_{x,E }}$.

Using the definition of the intersection index of divisors given by A.N.~Parshin in~\cite[\S~2.2]{Par}  by means of sum of local maps $\nu_{x,C}$, we immediately obtain from Proposition~\ref{prop:symbol} the following proposition. (We note that the analog of this proposition was used without written proof in~\cite{OsiPa}.)

\begin{prop} \label{intersect} Let $S$ and $T$ be divisors on a smooth projective surface $X$, and ${(S,T) \in \Z}$ be their intersection index. We have
$$
\langle \{j^S_x\}, \{ j^T_C  \}   \rangle_X = - (S,T)  \mbox{.}
$$
\end{prop}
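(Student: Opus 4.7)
The plan is to show that Proposition~\ref{intersect} follows almost tautologically from Proposition~\ref{prop:symbol} once Parshin's adelic definition of the intersection index is recalled. The main work is bookkeeping of signs and verifying that the finite-sum condition is met.

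First I would apply part~2 of Proposition~\ref{prop:symbol} to the pair of adelic elements $\{j^S_x\}$ and $\{j^T_C\}$ with $\Delta$ equal to the set of all pairs $x \in C$ on $X$. This gives
$$
\langle \{j^S_x\},\, \{j^T_C\} \rangle_X \;=\; \sum_{\{x\in C\}} \langle j^S_x,\, j^T_C \rangle_{x,C},
$$
where on the right the first argument denotes the image of $j^S_x \in K_x$ in $K_{x,C}$ under the embedding $K_x \subset K_{x,C}$, and similarly $j^T_C$ is viewed in $K_{x,C}$ via $K_C \subset K_{x,C}$. Part~2 also guarantees that this sum is finite.

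Next I would apply part~1 of Proposition~\ref{prop:symbol} to each local term to rewrite
$$
\langle j^S_x,\, j^T_C \rangle_{x,C} \;=\; -\,\nu_{x,C}(j^S_x,\, j^T_C).
$$
At this point the left-hand side of the desired equality has been expressed as $-\sum_{\{x\in C\}} \nu_{x,C}(j^S_x, j^T_C)$.

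Finally I would invoke Parshin's adelic formula for the intersection index from~\cite[\S~2.2]{Par}, which states precisely that
$$
(S,T) \;=\; \sum_{\{x \in C\}} \nu_{x,C}(j^S_x,\, j^T_C),
$$
with $j^S_x$ and $j^T_C$ as constructed just before the proposition (i.e.\ as equations of $S$ near $\Spec K_x$ and of $T$ near $\Spec K_C$ respectively, pushed into $K_{x,C}^*$). Combining the three displays yields the identity $\langle \{j^S_x\}, \{j^T_C\}\rangle_X = -(S,T)$.

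The only genuine verification needed is to confirm that the elements $\{j^S_x\}$ and $\{j^T_C\}$ that enter the left-hand side are exactly the ones whose local components $(j^S_x,\, j^T_C) \in K_{x,C}^* \times K_{x,C}^*$ are fed into Parshin's formula on the right; this is essentially the content of the two paragraphs immediately preceding the proposition, where the diagonal embeddings $\prod_C K_C \hookrightarrow \prod_{x \in C} K_{x,C}$ and $\prod_x K_x \hookrightarrow \prod_{x \in E} K_{x,E}$ are spelled out. The main conceptual obstacle is therefore nothing more than matching conventions; once one trusts Proposition~\ref{prop:symbol} and Parshin's definition, the proof is a one-line calculation.
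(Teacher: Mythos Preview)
Your proposal is correct and matches the paper's own approach exactly: the paper states just before the proposition that ``using the definition of the intersection index of divisors given by A.~N.~Parshin in~\cite[\S~2.2]{Par} by means of sum of local maps $\nu_{x,C}$, we immediately obtain from Proposition~\ref{prop:symbol} the following proposition,'' and your write-up is precisely this argument spelled out in full.
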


\section{Second Chern numbers}
\subsection{Central extension $\widehat{GL_n(\da_{\Delta})}$}  \label{centr-ext2}

For any $\Delta$ which is a subset of all pairs $x \in C$, where $C$ is an irreducible curve on  $X$. We have natural isomorphism of groups
$$
GL_n(\da_{\Delta}) = SL_n(\da_{\Delta}) \rtimes \da_{\Delta}^*  \mbox{,}
$$
where the group $\da_{\Delta}^*  $ is embedded into the
upper left corner of the group $GL_n(\da_{\Delta})$ and acts on the group $SL_n(\da_{\Delta})$ by conjugation, i.e.  by inner automorphisms  $h \mapsto aha^{-1}$,
where $a \in \da_{\Delta}^*$ and $h \in SL_n(\da_{\Delta})$.
By means of the central extension~\eqref{ext-1}  the action of the group $\da_{\Delta}^*$ is  lifted to the action  on the group $\theta^{-1}(SL_n(\da_{\Delta}))$ (by inner automorphisms  of the group $\widetilde{GL_n(\da_{\Delta})}$ .
We define a group $$ \widehat{GL_n(\da_{\Delta})} \eqdef \theta^{-1}(SL_n(\da_{\Delta})) \rtimes \da_{\Delta}^* \mbox{,}$$
whose natural homomorphism to $GL_n(\da_{\Delta})$ gives a central extension
$$
0 \lrto \Z \lrto \widehat{GL_n(\da_{\Delta})} {\lrto} GL_n(\da_{\Delta})  \lrto 1  \mbox{,}
$$
which, by construction, splits over the subgroup $\da_{\Delta}^*$ of $GL_n(\da_{\Delta})$.

\begin{nt}\em
To construct central extension $\widehat{GL_n(\da_{\Delta})}$ we used an embedding of $\da_{\Delta}^*$ to $GL_n(\da_{\Delta})$ as
$a \mapsto \diag(a,1, \ldots, 1)$, where $a \in \da_{\Delta}^*$. Since an inner automorphism of the group $GL_n(\da_{\Delta})$
induces a canonical automorphism of  the group that is a central extension of $GL_n(\da_{\Delta})$, another embedding $a \mapsto \diag(1, \ldots, a, \ldots, 1)$
of $\da_{\Delta}^*$ to $GL_n$
(into other place on the diagonal)
produces a construction of the central extension which is canonically isomorphic to the central extension  $\widehat{GL_n(\da_{\Delta})}$ (compare also with Remark~3 from~\cite{Osi2}).
\end{nt}

\begin{nt}  \label{dec-hat} \em
From formula~\eqref{decomp}  and Lemma~\ref{lem:Baer} we obtain the property which is similar to the statement of Lemma~\ref{lem:Baer} when we replace the central extensions  $\widetilde{GL_n(\da_{\Delta})}$, $\widetilde{GL_n(\da_{\Delta_1})}$  and $\widetilde{GL_n(\da_{\Delta_2})}$ to $\widehat{GL_n(\da_{\Delta})}$, $\widehat{GL_n(\da_{\Delta_1})}$  and $\widehat{GL_n(\da_{\Delta_2})}$ correspondingly
(compare with~\cite[Prop.~2]{Osi2}).
\end{nt}

\bigskip

 The analogy with the next proposition (and with remark after them) can be found in~\cite[\S~A5]{BSh} and~\cite[Appendix]{CPT}, where it was considered a central extension of a group $GL_n(A)$ by a group $K_2(A)$ for a ring $A$ with the property $SK_1(A)=0$. We note
that it is not clear how to deduce the next proposition (and remark after them) from~~\cite[\S~A5]{BSh} and~\cite[Appendix]{CPT}.

We consider a central extension
\begin{equation}  \label{double}
0 \lrto \Z \lrto  \widehat{\da_{\Delta}^*  \times \da_{\Delta}^* }  \lrto \da_{\Delta}^*  \times \da_{\Delta}^* \lrto 1  \mbox{,}
\end{equation}
where $\widehat{\da_{\Delta}^*  \times \da_{\Delta}^* } \eqdef \da_{\Delta}^*  \times \da_{\Delta}^*  \times \Z $ as a set, and with the multiplication law given as
$$(f,g;r) (f',g';r') \eqdef (ff', gg'; r+r' + \langle f',g   \rangle_{\Delta}) \mbox{,} $$
where $f,g,f',g'$ are from $\da_{\Delta}^*$, and $r, r'$ are from $\Z$.

For any $a \in \da_{\Delta}^*$ we denote by $\phi_1(a)$ the element from $\widehat{GL_n(\da_{\Delta})}$ which equals to the canonical section of the central extension $\widehat{GL_n(\da_{\Delta})}$ over the subgroup $\da_{\Delta}^*$ applied to the element $a$.
For any integer $l$ such that $1 \le l \le n$  we denote $\phi_l(a) = \Phi_l a \Phi_l^{-1}$,
 where $\Phi_l$ is a lift to $\widehat{GL_n(\da_{\Delta})}$ of the matrix from $GL_n(\da_{\Delta})$ which acts as transposition on standard  coordinates of $\da_{\Delta}^n$ permuting the first and the $l$-th coordinates. Clearly, $\phi_l(a)$
 does not depend on a lift of such matrix, and the image of $\phi_l(a)$ under the standard homomorphism to $GL_n(\da_{\Delta})$ equals to
 $\diag(1, \ldots, a , \ldots, 1)$ with the element $a$ is located on the $l$-th place of the diagonal.

\begin{prop}  \label{ext-restr}
\begin{enumerate}
\item We fix integers $1 \le i < j \le n$ and embed the group $\da_{\Delta}^*  \times \da_{\Delta}^*$ into the group $GL_n(\da_{\Delta})$
as $(f,g)  \mapsto \diag(1, \ldots, f, \ldots, g, \ldots, 1)$, where  elements $f$ and $g$ from $\da_{\Delta}^*$
are located on $i$-th and $j$-th places on the diagonal. We obtain that the restriction of the central extension $\widehat{GL_n(\da_{\Delta})}$
to the subgroup $\da_{\Delta}^*  \times \da_{\Delta}^*$ is isomorphic to the central extension~\eqref{double} via the map
$$
r \phi_i(f)  \phi_j(g)  \longmapsto  (f,g; r)  \mbox{,}
$$
where $r $ is from $\Z$, which is a subgroup of  the center of the group $\widehat{GL_n(\da_{\Delta})}$.
\item
For positive integers $n_1$ and $n_2$ such that  $n = n_1 + n_2$ we consider a subgroup
$$
P_{n_1, n_2}(\da_{\Delta}) \eqdef \left\{
\begin{pmatrix}
GL_{n_1}(\da_{\Delta})   & * \\
0 & GL_{n_2}(\da_{\Delta})
\end{pmatrix}
\right\}
\subset GL_n(\da_{\Delta})
$$
Let $p_i : P_{n_1, n_2}(\da_{\Delta})  \to GL_{n_i} (\da_{\Delta})$ be the projections, where $i=1$ and $i=2$. We obtain that the restriction
 of the central extension $\widehat{GL_n(\da_{\Delta})}$
to the subgroup $P_{n_1,n_2}(\da_{\Delta})$ is isomorphic to the Baer sum of central extensions $p_1^* (\widehat{GL_{n_1}(\da_{\Delta})})$,
$p_2^* (\widehat{GL_{n_2}(\da_{\Delta})}  )$ and $ (\det(p_1)  \times \det(p_2))^* \widehat{\da_{\Delta}^*  \times \da_{\Delta}^* } $.
\end{enumerate}
\end{prop}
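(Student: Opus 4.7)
The plan is to deduce both parts from a direct computation of commutators in $\widehat{GL_n(\da_{\Delta})}$, combining the Dim-torsor formula~\eqref{dim-ext} with the semidirect-product realization $\widehat{GL_n(\da_{\Delta})}=\theta^{-1}(SL_n(\da_{\Delta}))\rtimes\da_{\Delta}^*$.

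For Part~1, since $\phi_i(f)$ and $\phi_j(g)$ project to commuting diagonal matrices, the commutator $[\phi_i(f'),\phi_j(g)]$ lies in the central subgroup $\Z$. The key assertion is
\[
[\phi_i(f'),\phi_j(g)]\;=\;-\langle f',g\rangle_{\Delta}\qquad\text{in }\widehat{GL_n(\da_{\Delta})}.
\]
Granted this, the identity $\phi_j(g)\phi_i(f')=[\phi_j(g),\phi_i(f')]\,\phi_i(f')\phi_j(g)$ together with the fact that each $\phi_l\colon\da_{\Delta}^*\to\widehat{GL_n(\da_{\Delta})}$ is a homomorphism rewrites
\[
\phi_i(f)\phi_j(g)\cdot\phi_i(f')\phi_j(g')\;=\;\langle f',g\rangle_{\Delta}\cdot\phi_i(ff')\phi_j(gg'),
\]
which is exactly the multiplication law of~\eqref{double} under $r\,\phi_i(f)\phi_j(g)\leftrightarrow(f,g;r)$. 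To prove the commutator formula, I would first reduce to $i=1$ by conjugation with a lift $\Phi_i$ of the transposition $(1\,i)$: since $\Phi_i^2$ projects to the identity and is therefore central, conjugation by $\Phi_i$ sends $\phi_i\mapsto\phi_1$, fixes $\phi_j$ for $j\ne 1,i$ (by the canonical independence guaranteed by the remark after the construction of $\widehat{GL_n(\da_{\Delta})}$), and acts trivially on $\Z$. In the case $i=1$ one has $\phi_1(f')=(1,f')$ while $\phi_j(g)=(\tilde m,g)$ with $m=\diag(g^{-1},1,\ldots,g,\ldots,1)\in SL_n(\da_{\Delta})$ (with $g^{-1}$ at position $1$ and $g$ at position $j$); the semidirect-product commutator then unwinds to
\[
[\phi_1(f'),\phi_j(g)]\;=\;[\tilde F,\tilde m]\qquad\text{computed in }\widetilde{GL_n(\da_{\Delta})},
\]
for $\tilde F$ any lift of $\diag(f',1,\ldots,1)$. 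Factoring $m=\diag(g^{-1},1,\ldots,1)\cdot\diag(1,\ldots,g,\ldots,1)$ and using biadditivity of the commutator pairing, the contribution from $\diag(1,\ldots,g,\ldots,1)$ vanishes---by the direct-sum analog of Lemma~\ref{lem:Baer} for $\da_{\Delta}^n=\da_{\Delta}\oplus\da_{\Delta}^{n-1}$, a direct consequence of~\eqref{dim-ext}---while the contribution from $\diag(g^{-1},1,\ldots,1)$ lives in the position-$1$ copy of $\widetilde{GL_1(\da_{\Delta})}$ and equals $\langle f',g^{-1}\rangle_{\Delta}=-\langle f',g\rangle_{\Delta}$ by definition.

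For Part~2, I would first treat the Levi $GL_{n_1}(\da_{\Delta})\times GL_{n_2}(\da_{\Delta})$. Formula~\eqref{dim-ext} applied to $\da_{\Delta}^n=\da_{\Delta}^{n_1}\oplus\da_{\Delta}^{n_2}$ identifies $\widetilde{GL_n(\da_{\Delta})}$ restricted to the Levi with the Baer sum $p_1^*\widetilde{GL_{n_1}(\da_{\Delta})}\oplus p_2^*\widetilde{GL_{n_2}(\da_{\Delta})}$. Translating between $\widetilde{\phantom{G}}$ and $\widehat{\phantom{G}}$ through the semidirect-product realization introduces, on each side, a Baer correction of the form $\det^*\widetilde{GL_1(\da_{\Delta})}$; on the Levi $\det=m\circ(\det\times\det)$ for the multiplication $m\colon\da_{\Delta}^*\times\da_{\Delta}^*\to\da_{\Delta}^*$, and the Part~1 commutator formula, recast as a cocycle identity on $\da_{\Delta}^*\times\da_{\Delta}^*$, yields
\[
m^*\widetilde{GL_1(\da_{\Delta})}\oplus\widehat{\da_{\Delta}^*\times\da_{\Delta}^*}\;\cong\;\pi_1^*\widetilde{GL_1(\da_{\Delta})}\oplus\pi_2^*\widetilde{GL_1(\da_{\Delta})}.
\]
A short Baer-sum manipulation then produces the claimed decomposition $\widehat{GL_n(\da_{\Delta})}|_{GL_{n_1}\times GL_{n_2}}\cong p_1^*\widehat{GL_{n_1}(\da_{\Delta})}\oplus p_2^*\widehat{GL_{n_2}(\da_{\Delta})}\oplus(\det(p_1)\times\det(p_2))^*\widehat{\da_{\Delta}^*\times\da_{\Delta}^*}$. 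To extend the isomorphism from the Levi to $P_{n_1,n_2}(\da_{\Delta})$, I would use that the unipotent radical $U=\{I+X\}$ sits inside $SL_n(\da_{\Delta})$ and that $\theta^{-1}(SL_n(\da_{\Delta}))$ splits canonically over $U$ via the natural dimension theory on the commensurable lattices $\oo_{\da_{\Delta}}^n$ and $u\oo_{\da_{\Delta}}^n$; the same splitting over $U$ is inherited by each summand on the right-hand side (as $U$ projects to the identity in both $GL_{n_i}$ blocks and has trivial determinant), and the semidirect decomposition $P_{n_1,n_2}=U\rtimes(GL_{n_1}\times GL_{n_2})$ then extends the Levi isomorphism uniquely to all of $P_{n_1,n_2}(\da_{\Delta})$.

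The principal technical obstacle is the unipotent step in Part~2: one must verify the canonical $U$-splitting of $\widehat{GL_n(\da_{\Delta})}$ and its pointwise compatibility with the three induced $U$-splittings on the right-hand side, ensuring no hidden cocycle contribution arises from the Levi action on $U$. Once this compatibility is in place, the remainder is routine Baer-sum bookkeeping on top of the Part~1 commutator computation.
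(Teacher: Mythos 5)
Your Part 1 is, up to notation, the paper's own argument: reduce to $i=1$ by conjugating with $\Phi_i$, pass from $\widehat{GL_n(\da_{\Delta})}$ to a commutator $[\widetilde{d_1(f')},\tilde m]$ computed in $\widetilde{GL_n(\da_{\Delta})}$ with $m=\diag(g^{-1},1,\ldots,g,\ldots,1)$, and evaluate it componentwise (the direct-sum analogue of Lemma~\ref{lem:Baer}), getting $\langle f',g^{-1}\rangle_{\Delta}$ from the first diagonal entry and $0$ from the $j$-th. One caveat: it is not literally true that conjugation by $\Phi_i$ ``fixes $\phi_j$'' as an element of the extension, and the remark you cite does not assert this; what you need --- and what the paper uses --- is only that $\Phi_i\phi_j(g)\Phi_i^{-1}$ and $\phi_j(g)$ have the same image $d_j(g)$ in $GL_n(\da_{\Delta})$, so that their commutators with $\phi_1(f')$ coincide because the commutator of lifts of commuting elements is independent of the choice of lifts.

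In Part 2 you genuinely diverge. The paper applies Construction~1.7 of~\cite{BrD} to $P_{n_1,n_2}(\da_{\Delta})=G\rtimes H$ with $G$ the block-upper-triangular group having $SL_{n_1}$, $SL_{n_2}$ on the diagonal and $H=\da_{\Delta}^*\times\da_{\Delta}^*$; the unipotent radical then sits entirely inside $G\subset SL_n(\da_{\Delta})$, and the extension over $G$ is identified in one stroke from the $P_{n_1,n_2}(\da_{\Delta})$-equivariant exact triple~\eqref{da-exact} together with~\eqref{dim-ext}. You instead take $P_{n_1,n_2}=U\rtimes(\mbox{Levi})$, handle the Levi via the Baer identity relating $\widetilde{GL_n(\da_{\Delta})}$, $\widehat{GL_n(\da_{\Delta})}$ and $\det^*\widetilde{GL_1(\da_{\Delta})}$, and must then separately construct and compare splittings over $U$. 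Two remarks. First, your torus identity should be obtained as the restriction of that Baer identity to the diagonal torus combined with Part~1, not merely ``recast from the commutator formula'': for central extensions of an abelian group by $\Z$ the commutator pairing does not determine the isomorphism class, so keep the argument at the level of actual isomorphisms. Second, the step you flag as the principal obstacle is real but is closed by exactly the mechanism you already invoke elsewhere: $U$ preserves~\eqref{da-exact} and acts as the identity on $\da_{\Delta}^{n_1}$ and $\da_{\Delta}^{n_2}$, so $u\oo_{\da_{\Delta}}^n\cap\da_{\Delta}^{n_1}=\oo_{\da_{\Delta}}^{n_1}$ and the image of $u\oo_{\da_{\Delta}}^n$ in $\da_{\Delta}^{n_2}$ is $\oo_{\da_{\Delta}}^{n_2}$; hence~\eqref{dim-ext} canonically trivializes $\Dim(\oo_{\da_{\Delta}}^n\mid u\oo_{\da_{\Delta}}^n)$ (there is no canonical dimension theory on a pair of commensurable lattices without this block structure), and the Levi-equivariance of~\eqref{da-exact} gives the required compatibility of the $U$-splittings with the Levi action. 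In effect your route isolates as a separate lemma what the paper absorbs into its choice of $G$; both work, yours makes the determinant cocycle~\eqref{double} more visibly responsible for the correction term, the paper's is shorter.
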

\begin{proof}
1. For any element $a \in \da^*$ and integer $i$ such that  $1 \le i \le n$, we denote $d_i(a) = \diag(1, \ldots, a, \ldots, 1) \in GL_n(\da_{\Delta})$,
 where $a$ is located on $i$-th place in the diagonal matrix.

It is enough to prove the following equality inside the group $\widehat{GL_n(\da_{\Delta})}$ for any elements  $f,g,f',g'$ from $GL_n(\da_{\Delta})$:
$$
\phi_i(f)  \phi_j(g) \phi_i(f') \phi_j(g') = \langle f', g \rangle_{\Delta} \cdot \phi_i(ff') \phi_j(gg') \mbox{.}
$$
Clearly, this equality follows from an equality  $\phi_j(g) \phi_i(f') = \langle f', g \rangle_{\Delta} \cdot \phi_i(f') \phi_j(g)$.
Thus, we have to prove that $[\phi_j(g), \phi_i(f')]= \langle f', g \rangle_{\Delta}$
 or $[\phi_i(f'), \phi_j(g)]= \langle g, f' \rangle_{\Delta}$.
 Applying the conjugation by  $\Phi_i$, we obtain that it is enough to prove an equality ${[\phi_1(f'), \Phi_i \cdot \phi_j(g)  \cdot \Phi_i^{-1}   ] = \langle  g, f' \rangle_{\Delta}} $. We note that the image of $\Phi_i \cdot \phi_j(g)  \cdot \Phi_i^{-1}$
 under the standard homomorphism to $GL_n(\da_{\Delta})$ equals to
 $d_j(g)$.  Since the commutator of lifts of two elements does not depend on the choice of lifts of these elements  to the central extension, we have that
 ${[\phi_1(f'), \Phi_i \cdot \phi_j(g)  \cdot \Phi_i^{-1}   ] = [\phi_1(f'), \phi_j(g)]}$.  Further,
  using the bilinear property of the commutator of lifts of commuting elements,
  we obtain
 \begin{multline}  \label{comm_mult}
 [\phi_1(f'), \phi_j(g)]= [\phi_1(f'), \phi_1(g) \phi_1(g)^{-1} \phi_j(g)]= \\ = [\phi_1(f'), \phi_1(g)] \cdot [\phi_1(f'), \phi_1(g)^{-1} \phi_j(g) ] =
 [\phi_1(f'), \phi_1(g)^{-1} \phi_j(g) ] \mbox{.}
 \end{multline}
We denote by $h$ the image   of the element $ \phi_1(g)^{-1} \phi_j(g)$ under the homomorphism to $GL_n(\da_{\Delta})$. Since $h$ belongs to the subgroup $SL_n(\da_{\Delta})$, by construction of the group  $\widehat{GL_n(\da_{\Delta})}$ the last commutator in formula~\eqref{comm_mult} equals to commutator
$[\widetilde{d_1(f')}, \tilde{h}]$ computed in the group  $\widetilde{GL_n(\da_{\Delta})}$, where $\widetilde{d_1(f')}$
and $\widetilde{h}$ are lifts of elements $d_1(f')$ and $h$ from the group $GL_n(\da_{\Delta})$ to the group $\widetilde{GL_n(\da_{\Delta})}$.  We obtain  in the group $\widetilde{GL_n(\da_{\Delta})}$  an equality
$$
[\widetilde{d_1(f')}, \tilde{h}]= [\widetilde{d_1(f')}, \widetilde{d_1(g^{-1})}  \cdot \widetilde{d_j(g)}] =
[\widetilde{d_1(f')}, \widetilde{d_1(g^{-1})}] \cdot [\widetilde{d_1(f')},  \widetilde{d_j(g)}] \mbox{.}
$$
Now from construction of the group   $\widetilde{GL_n(\da_{\Delta})}$ we obtain that
the commutator of the lift of diagonal matrices can be calculated
componentwise (first, separately for each place on the diagonal, and then multiply the results). Therefore we obtain
$[\widetilde{d_1(f')}, \widetilde{d_1(g^{-1})}] = \langle g, f'\rangle_{\Delta}$,
and  since $j\ne1$, we have  $[\widetilde{d_1(f')},  \widetilde{d_j(g)}]=1$.

2. We embed the group $\da_{\Delta}^*  \times \da_{\Delta}^*$ into the group $P_{n_1, n_2}(\da_{\Delta})$
in the following way: ${(f,g)  \mapsto \diag(f, \ldots, g, \ldots, 1)}$, where  elements $f$ and $g$ from $\da_{\Delta}^*$
are located on the first and $(n_1+1)$-th places on the diagonal, and other places of the diagonal are occupied by~$1$.  We can write the group $P_{n_1, n_2} (\da_{\Delta})$ as a semidirect product:
\begin{equation}  \label{semi}
\left\{
\begin{pmatrix}
SL_{n_1}(\da_{\Delta})   & * \\
0 & SL_{n_2}(\da_{\Delta})
\end{pmatrix}
\right\}
\rtimes
(\da_{\Delta}^*  \times \da_{\Delta}^*)  \mbox{.}
\end{equation}

According to Construction~1.7 from~\cite{BrD}, a central extension $\widehat{G \rtimes H}$ of a semidirect product $G \rtimes H$ by a group $A$ is equivalent to the following data: $1)$ a central extension $\hat{G}$ of $G$ by $A$; $2)$ a central extension $\hat{H}$ of $H$ by $A$; $3)$ an action of $H$ on $\hat{G}  \to G$, lifting the action of $H$ on $G$. We note that central extensions  $\hat{G}$ and $\hat{H}$ are obtained as restrictions of the central extension $\widehat{G \rtimes H}$ to the subgroups $G$ and $H$ correspondingly.

To prove the second statement of Proposition~\ref{ext-restr}, we apply the above construction to the  case of semidirect product given by formula~\eqref{semi}, where $H = \da_{\Delta}^*  \times \da_{\Delta}^*$ and $G = \left\{
\begin{pmatrix}
SL_{n_1}(\da_{\Delta})   & * \\
0 & SL_{n_2}(\da_{\Delta})
\end{pmatrix}
\right\}$. Then condition $2)$ of the construction follows from the first statement of Proposition~\ref{ext-restr}. To obtain condition $1)$ we note that
the restriction of the central extension $\widetilde{GL_n(\da_{\Delta})}$ to the subgroup $P_{n_1, n_2}(\da_{\Delta})$
is canonically isomorphic to the Baer sum of central extensions $p_1^* \widetilde{GL_{n_1}(\da_{\Delta})}$ and
$p_2^* \widetilde{GL_{n_2}(\da_{\Delta})}$. This fact easily follows from a natural action of $ P_{n_1, n_2}(\da_{\Delta})$ on an exact triple (exact triple of complete $C_2$-vector spaces over $k$)
\begin{equation}  \label{da-exact}
0 \lrto \da_{\Delta}^{n_1}  \lrto \da_{\Delta}^{n}  \lrto \da_{\Delta}^{n_2} \lrto 0 \mbox{,}
\end{equation}
from formula~\eqref{dim-ext},  and from the construction of the group $\widetilde{GL_n(\da_{\Delta})}$. To finish the checking of condition $2)$ we note that the restrictions of the central extensions $\widetilde{GL_n(\da_{\Delta})}$ and $\widehat{GL_n(\da_{\Delta})}$ to the subgroup $SL_n(\da_{\Delta})$  coincide (or canonically isomorphic). To obtain condition 3) we note that an action of the group $H$ on $\hat{G} \to G$ comes from the action by conjugations of elements from $\da_{\Delta}^* \times \da_{\Delta}^*$ (lifted to $\widetilde{GL_n(\da_{\Delta})}$) on $\widetilde{GL_n(\da_{\Delta})}$ restricted as central extension to $P_{n_1, n_2}(\da_{\Delta})$. Besides, it is important that the group $H$ naturally acts on exact triple~\eqref{da-exact}, and therefore the action of $H$ on
$\hat{G}$ is compatible with the action on the corresponding Baer sum with respect to projections $p_1$ and $p_2$.
\end{proof}

\begin{nt} {\em
\begin{enumerate}
\item
From the first statement of Proposition~\ref{ext-restr} it is easy to obtain the following generalization.
For any integer $k$ such that $1 \le k \le n $ we consider a central extension
\begin{equation}  \label{k-tuple}
0 \lrto \Z \lrto  \widehat{({\da_{\Delta}^*})^k}  \lrto ({\da_{\Delta}^*})^k \lrto 1  \mbox{,}
\end{equation}
where
$({\da_{\Delta}^*})^k$ is the direct product $\da_{\Delta}^*  \times \ldots \da_{\Delta}^*$ with $\da_{\Delta}^*$ beeing taken $k$ times, and
$\widehat{({\da_{\Delta}^*})^k} \eqdef ({\da_{\Delta}^*})^k \times \Z$ as a set, where the group multiplication law is given as
$$
(f_1, \ldots, f_k; r)(f_1', \ldots, f_k'; r') \eqdef (f_1 f_1', \ldots, f_k f_k'; r + r' + \sum_{i < j} \langle f_i', f_j \rangle_{\Delta} )  \mbox{,}
$$
where $f_1, \ldots, f_k, f_1',  \ldots, f_k'$ are from $\da_{\Delta}^*$, and $r, r'$ are from $\Z$. We fix integers $1 \le j_1 < \ldots j_k \le n$,
and consider an embedding of the group $({\da_{\Delta}^*})^k$ to the group $GL_n(\da_{\Delta})$ given as: $(f_1, \ldots, f_k)$ is mapped to the diagonal matrix $\diag(a_1, \ldots, a_n)$, where $a_{j_i}=f_i$ (for $1 \le i \le k$), and $a_l=1$ otherwise. Then the restriction of the central extension  $\widehat{GL_n(\da_{\Delta})}$ to the subgroup  $({\da_{\Delta}^*})^k$ is isomorphic to the central extension~\eqref{k-tuple} via the map
$$
r \phi_{j_1}(f_1)  \cdot \ldots \cdot  \phi_{j_k}(f_k)  \longmapsto  (f_1, \ldots, f_k; r)  \mbox{.}
$$
\item
The central extension  $\widehat{GL_n(\da_{\Delta})}$
canonically splits over $\da_{\Delta}^*$, where this group is embedded into the $i$-th place of the diagonal, via the map $a \mapsto \phi_i(a)$. From this fact and  the second statement of Proposition~\ref{ext-restr} we obtain that the central extension
$\widehat{GL_n(\da_{\Delta})}$  canonically splits over the subgroup
$ U_n=
\left\{
\begin{pmatrix}
1  & & * \\
& 1 & \\
0 & & 1
\end{pmatrix}
\right\}
$.
\end{enumerate}
}
\end{nt}

\subsection{Canonical splittings}   \label{can-split}

Now we give the generalization of non-commutative reciprocity laws from~\cite[\S~3.5]{Osi2}    when $X$ is a smooth algebraic surface over $k$.

We recall (see the end of Section~\ref{sec_comm}) that we have diagonal embeddings
$$
\prod_{C \subset X} K_{C} \hookrightarrow \prod_{x \in C} K_{x,C}     \qquad  \mbox{and} \qquad \prod_{x \in X} K_x \hookrightarrow \prod_{x \in C} K_{x,C}  \mbox{.}
$$
There are the following subrings of the adelic ring $\da_X$:
\begin{equation}  \label{subrings}
\da_{X,01} = (\prod_{C \subset X} K_{C})  \cap \da_X    \quad \mbox{,}  \qquad     \qquad \da_{X, 02} =  (\prod_{x \in X}  K_x)    \cap \da_X
\quad   \mbox{,}  \qquad \mbox{and} \qquad \da_{X, 12}= \oo_{\da_X}
\mbox{,}
\end{equation}
where the intersection is taken inside the ring $\prod_{x \in C} K_{x,C}$.

\begin{prop}  \label{split-ext}
\begin{enumerate}
\item \label{st1}
For any set $\Delta$ of pairs $x \in C$ the central extensions $\widetilde{GL_n(\da_{\Delta})}$ and $\widehat{GL_n(\da_{\Delta}) }$
canonically split over the subgroup $GL_n(\oo_{\da_{\Delta}})$ of the group $GL_n(\da_{\Delta})$.
\item \label{st2}
The central extensions $\widetilde{GL_n(\da_{X})}$ and $\widehat{GL_n(\da_{X}) }$
canonically split over the subgroup $GL_n(\da_{X, 02})$ of the group $GL_n(\da_{X})$.
\item \label{st3}
Suppose that $X$ is projective. Then the central extensions $\widetilde{GL_n(\da_{X})}$ and $\widehat{GL_n(\da_{X}) }$
canonically split over the subgroup $GL_n(\da_{X, 01})$ of the group $GL_n(\da_{X})$.
\item \label{st4}
Splittings of the central extension
 $\widehat{GL_n(\da_{X}) }$ from statements~\ref{st2}-\ref{st3}  coincide over the  subgroup $GL_n(k(X))$.
 The analogous results are also true for the splittings from statements~\ref{st1}-\ref{st2} and the subgroup $GL_n(\da_{X,12} \cap \da_{X, 02})$, and for the splittings from statements~\ref{st1}, \ref{st3} and the subgroup $GL_n(\da_{X, 12} \cap \da_{X, 01})$.
 \item  \label{st5}
Under the same conditions as in statements~\ref{st1}-\ref{st3}, the central extension $\widehat{\da_{\Delta}^* \times \da_{\Delta}^*}$ (see formula~\eqref{double})
splits over the subgroups $\oo_{\da_{\Delta}}^* \times \oo_{\da_{\Delta}}^*$, $\da_{X,02}^* \times \da_{X, 02}^*$ and
$\da_{X, 01}^* \times \da_{X, 01}^*$ via the map $(f,g) \mapsto (f,g;0)$.
\item   \label{st6}
For the central extension $\widehat{GL_n(\da_{X}) }$ restricted to a subgroup ${P_{n_1, n_2} (\da_{X}) \subset GL_n(\da_X)}$,  splittings from statements~\ref{st1}-\ref{st3} and~\ref{st5} are compatible with respect to the isomorphism constructed in  the second statement of Proposition~\ref{ext-restr}.
\end{enumerate}
\end{prop}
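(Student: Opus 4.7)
The plan is to handle the six statements in order. Statements~\ref{st1}--\ref{st3} give the underlying splittings of $\widetilde{GL_n(\da_\Delta)}$, from which the corresponding splittings of $\widehat{GL_n(\da_\Delta)}$ follow by combining with the canonical section over $\da_\Delta^*$ built into the construction of $\widehat{GL_n(\da_\Delta)}$. For statement~\ref{st1}, every $g \in GL_n(\oo_{\da_\Delta})$ preserves $\oo_{\da_\Delta}^n$, so $\Dim(\oo_{\da_\Delta}^n \mid g\oo_{\da_\Delta}^n) = \Dim(0)$ is canonically trivialized by the zero dimension theory, and the map $g \mapsto (g, 0)$ is multiplicative by the explicit formula defining~\eqref{ext-1}.

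For statements~\ref{st2} and~\ref{st3} I would construct canonical dimension theories on $g\oo_{\da_X}^n/\oo_{\da_X}^n$ using the decomposition of $\da_X$ along points, respectively along irreducible curves. For~\ref{st2}, every $g \in GL_n(\da_{X,02})$ acts componentwise via $g_x \in GL_n(K_x)$ at each closed point $x$; each $g_x$ determines a canonical dimension theory on the local $1$-Tate piece, and summing these contributions yields a well-defined dimension theory on the $2$-Tate space $\da_X^n$ because any open linearly compact subspace has nontrivial projection to only finitely many local components. Multiplicativity is immediate from the componentwise definition. For~\ref{st3}, the analogous construction runs along irreducible curves, and projectivity of $X$ is used so that the local-along-curves dimension theories glue into a group homomorphism; finiteness of coherent cohomology on a projective surface provides the compatibility that is absent in the non-projective case.

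For statement~\ref{st4}, on each overlap subgroup I would show the two candidate sections agree by reducing both to the same canonical dimension theory. The overlaps involving $\da_{X,12}$ are immediate because the trivial splitting from~\ref{st1} agrees with the locally-summed splittings of~\ref{st2} and~\ref{st3} whenever the reference lattice is itself preserved. For the subgroup $GL_n(k(X))$, agreement of the two candidate splittings is a form of adelic reciprocity whose scalar case reduces to Proposition~\ref{intersect}, and in general follows because both splittings compute the same global Euler characteristic on the projective surface summed in two different orders. For statement~\ref{st5}, the map $(f,g) \mapsto (f, g; 0)$ is a homomorphism in the central extension~\eqref{double} iff $\langle f', g \rangle_\Delta = 0$ for all $f, g, f'$ in the given subgroup, and this vanishing is exactly the splitting of $\widetilde{GL_1(\da_\Delta)}$ from the relevant case of~\ref{st1}--\ref{st3} together with bimultiplicativity of the commutator. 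For statement~\ref{st6}, I apply Proposition~\ref{ext-restr}.2 to decompose the restriction of $\widehat{GL_n(\da_X)}$ to $P_{n_1,n_2}(\da_X)$ as the Baer sum of three pullback extensions and assemble the sections coming from~\ref{st1}--\ref{st3} on each $GL_{n_i}$ factor and from~\ref{st5} on $\da_X^* \times \da_X^*$; the glueing is routine once the individual pieces are available.

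The main obstacle will be statement~\ref{st3}: constructing a canonical dimension theory on $g\oo_{\da_X}^n/\oo_{\da_X}^n$ for $g \in GL_n(\da_{X, 01})$, verifying that it is multiplicative in $g$, and checking it matches the other constructions on the expected overlaps. The passage from local data indexed by individual curves to a globally consistent dimension theory is the genuinely non-formal step, and it is precisely where projectivity of $X$ is needed via finiteness of higher cohomology.
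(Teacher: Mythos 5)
Your overall architecture — trivialize the $\Dim$-torsors canonically over each subgroup, pass from $\widetilde{GL_n}$ to $\widehat{GL_n}$ via the semidirect product, and reduce statements~\ref{st5}--\ref{st6} to the earlier ones — is the paper's architecture, and your treatments of statements~\ref{st1} and~\ref{st5} are correct. But the two steps you yourself flag as the real content are left unresolved, and the mechanism you sketch for them is not the one that works. For statement~\ref{st2} there is no ``canonical dimension theory on the local $1$-Tate piece determined by $g_x$'': the torsor $\Dim(V)$ of a $1$-Tate space has no distinguished point attached to an automorphism, only to a distinguished open linearly compact subspace. The actual construction is to observe that for $g_2\oo_{\da_X}^n \supset g_1\oo_{\da_X}^n$ the subspace $(g_2\oo_{\da_X}^n\cap\da_{X,02}^n)/(g_1\oo_{\da_X}^n\cap\da_{X,02}^n)$ of $W=g_2\oo_{\da_X}^n/g_1\oo_{\da_X}^n$ is open and linearly compact, and to take the unique $d_{g_1,g_2}$ vanishing on it; multiplicativity then follows because these distinguished lattices are compatible with~\eqref{dim3} and with the $GL_n(\da_{X,02})$-action. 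For statement~\ref{st3} the situation is dual and cannot be phrased as ``gluing local-along-curves dimension theories'': $\da_{X,01}^n$ meets $W$ not in an open compact subspace but in a \emph{discrete} subspace $Y$ with $W/Y$ linearly compact (this is where projectivity enters, via finite-dimensionality of $H^0(C,\oo_C)$ and $H^1(C,\oo_C)$ making $k(C)$ discrete and cocompact in the adeles of each curve $C$). One then trivializes $\Dim(W)$ through the isomorphism~\eqref{dim-ext} applied to $0\lrto Y\lrto W\lrto W/Y\lrto 0$, normalizing $d_Y((0))=0$ and $d_{W/Y}(W/Y)=0$. Without this discrete/cocompact dichotomy your plan for~\ref{st3} has no construction to verify.

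Statement~\ref{st4} over $GL_n(k(X))$ also needs repair: ``both splittings compute the same Euler characteristic summed in two orders'' is a heuristic, not an argument, and Proposition~\ref{intersect} controls only a commutator pairing, not the difference of two sections. The efficient argument is group-theoretic: two homomorphic sections of a central extension by $\Z$ differ by a homomorphism to $\Z$, which vanishes on the perfect group $SL_n(k(X))$; it remains only to note that both sections of $\widehat{GL_n(\da_X)}$ are trivial over the corner copy of $k(X)^*$ by the very construction of the splittings in~\ref{st2}--\ref{st3}. The overlaps with $GL_n(\da_{X,12})$ you handle correctly. With these two gaps filled, the rest of your plan goes through.
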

\begin{proof}
1. The splittings follow from the constructions of the central extensions  $\widetilde{GL_n(\da_{\Delta})}$ and $\widehat{GL_n(\da_{\Delta}) }$,
since for any element $f \in GL_n(\oo_{\da_{\Delta}})$ we have $f \oo_{\da_{\Delta}}^n = \oo_{\da_{\Delta}}^n$.

2. First we prove that the central extension $\widetilde{GL_n(\da_{X})}$ splits over the subgroup $GL_n(\da_{X,02})$.
We note that for any two divisors $D_1 \ge D_2$ the subspace
$$(\oo_{\da_X}(D_2) \cap \da_{X, 02})  / (\oo_{\da_X}(D_1) \cap \da_{X,02}) \quad \subset \quad \oo_{\da_X}(D_2)   / \oo_{\da_X}(D_1) $$
is an open linearly compact $k$-vector space. Hence for any $g_1, g_2  \in GL_n(\da_X)$ such that ${g_2 \oo_{\da_X}^n   \supset g_1 \oo_{\da_X}^n}$
the subspace
$$(g_2 \oo_{\da_X}^n \cap \da_{X, 02}^n)  / ( g_1 \oo_{\da_X}^n \cap \da_{X,02}^n) \quad \subset \quad g_2 \oo_{\da_X}^n   / g_1 \oo_{\da_X}^n $$
 is an open linearly compact $k$-vector space. We define $d_{g_1, g_2} \in \Dim( g_2 \oo_{\da_X}^n   / g_1 \oo_{\da_X}^n)$ by the rule
 $d_{g_1, g_2} ((g_2 \oo_{\da_X}^n \cap \da_{X, 02}^n)  / ( g_1 \oo_{\da_X}^n \cap \da_{X,02}^n))=0 $. Using formulas~\eqref{dim1}-\eqref{dim3},
 we obtain a well-defined element $d_{g_1,g_2} \in \Dim ( g_1 \oo_{\da_X}^n   \mid g_2 \oo_{\da_X}^n)$ for any elements $g_1, g_2 \in GL_n(\da_X)$.
 Now it is easy to see that the map
 \begin{equation}  \label{splitt}
  GL_n(\da_{X, 02}) \lrto \widetilde{GL_n(\da_{\Delta})} : \quad  g \longmapsto (g, d_{1,g})
  \end{equation}
 is a group splitting.

 To prove the splitting of  the central extension $\widehat{GL_n(\da_{X})}$ over the subgroup $GL_n(\da_{X,02})$ we note that
 $GL(n, \da_{X, 02}) = SL(n, \da_{X, 02}) \rtimes \da_{X, 02}^* $. For any $a \in \da_{X, 02}^* $, the conjugation by the element
  $(a, d_{1,a})  \in \widetilde{GL_n(\da_{\Delta})}$  does not change the section over the group $SL(n, \da_{X, 02})$ which was constructed in formula~\eqref{splitt}. By construction, this gives the splitting of $\widehat{GL_n(\da_{X})}$ over $GL_n(\da_{X,02})$, where we take the trivial section over $\da_{X, 02}^*$.

3. The idea for the proof of this statement is analogous to the proof of statement~\ref{st2}, but instead  of element
$d_{g_1,g_2}  \in \Dim ( g_1 \oo_{\da_X}^n   \mid g_2 \oo_{\da_X}^n)$ we have to use another element $d_{g_1,g_2}^{\prime}  \in \Dim ( g_1 \oo_{\da_X}^n   \mid g_2 \oo_{\da_X}^n)$ which is constructed by means of the following property. For any two divisors $D_1 \ge D_2$ the subspace
$U =(\oo_{\da_X}(D_2) \cap \da_{X, 01})  / (\oo_{\da_X}(D_1) \cap \da_{X,01})$ of the space  $V=\oo_{\da_X}(D_2)   / \oo_{\da_X}(D_1) $
is a discrete $k$-vector subspace such that $V/U$ is a  linearly compact $k$-vector space.  (It is important that on the projective curve $C$ the field of rational functions $k(C)$  is a discrete subspace inside the adelic ring of $C$, and the quotient space is a linearly compact $k$-vector space that follows, for example, from the adelic complex on the curve $C$ and the fact that $k$-vector spaces $H^0(C, \oo_C)$ and $H^1(C, \oo_C)$ are finite-dimensional over $k$.) Hence for any $g_1, g_2  \in GL_n(\da_X)$ such that ${g_2 \oo_{\da_X}^n   \supset g_1 \oo_{\da_X}^n}$
we have an exact triple of $k$-vector spaces
$$
0 \lrto Y   \lrto W   \lrto W/Y   \lrto 0   \mbox{,}
$$
where $W= g_2 \oo_{\da_X}^n   / g_1 \oo_{\da_X}^n$ is a locally linearly  compact $k$-vector space, $k$-vector space ${Y=   (g_2 \oo_{\da_X}^n \cap \da_{X, 01}^n)  / ( g_1 \oo_{\da_X}^n \cap \da_{X,01}^n)}$
is a discrete subspace in induced topology, and the space $W/Y$ endowed with the quotient topology is a linearly compact $k$-vector space.
Using formulas~\eqref{ex_seq}-\eqref{dim-ext}   we define an element $d_{g_1, g_2}'  \in \Dim(W) $ as $d_Y \otimes d_{W/Y}$, where $d_Y \in \Dim(Y)$ is defined as $d_Y((0))=0$ (here $(0)$ is the zero subspace), and $d_{W/Y} \in \Dim (W/Y)$ is defined as $d_{W/Y}(W/Y)=0$. To finish we proceed further as in the proof of statement~\ref{st2}.

4. The group $SL_n(k(X))$ is perfect. Therefore any two sections of the central extension  $\widetilde{GL_n(\da_{X})}$ restricted to the group $SL_n(k(X))$ coincide. Hence,  two sections of the central extension $\widehat{GL_n(\da_{X})}$ restricted to the group $GL_n(k(X))$ coincide, because over the subgroup $k(X)^* : a \mapsto \diag(a,1, \ldots, 1)$ two sections are trivial by constructions in the proof of statements~\ref{st2}-\ref{st3}.

Various splittings of  the central extension $\widetilde{GL_n(\da_{X})}$  over the subgroup \linebreak ${GL_n(\da_{X, 01} \cap \da_{X,12}) }$
and over the subgroup ${GL_n(\da_{X, 02} \cap \da_{X,12}) }$ coincide, because
 for any element $f \in GL_n(\oo_{\da_{\Delta}})$ we have $f \oo_{\da_{\Delta}}^n = \oo_{\da_{\Delta}}^n$, and then we have to use the constructions of the splittings.
Hence, again by construction,  the same is true for the central extension $\widehat{GL_n(\da_{X})}$.

5. By statements~\ref{st1}-\ref{st3} the central extension $\widetilde{GL_1(\da_{\Delta})}$ splits over $\oo_{\da_{\Delta}}^*$, the central extension
$\widetilde{GL_1(\da_{X})}$ splits over $\da_{X, 02}^*$ and over $\da_{X, 01}^*$ (when $X$ is projective). Hence $\langle f,g  \rangle_{\Delta}=0$
for $f,g \in \oo_{\da_{\Delta}}^*$, and $\langle f, g  \rangle_{X} =0  $ for $f,g \in \da_{X,02}^*$ and when $X$ is projective for $f,g \in \da_{X, 01}^*$. Now we finish by the definition of the multiplication law in $\widehat{\da_{\Delta}^* \times \da_{\Delta}^*}$.

6. This statement follows from constructions of sections in proofs of statements~\ref{st1}-\ref{st3} and of the second statement  of Proposition~\ref{ext-restr}. It is important that together with exact sequence~\eqref{da-exact} we can write exact sequences
$$
0 \lrto \da_{X, 02}^{n_1}  \lrto \da_{X, 02}^n    \lrto \da_{X, 02}^{n_2}  \lrto 0
\quad \mbox{and}   \quad
0 \lrto \da_{X, 01}^{n_1}  \lrto \da_{X, 01}^n    \lrto \da_{X, 01}^{n_2}  \lrto 0
$$
and the corresponding groups $ P_{n_1, n_2} (\da_{X}) \cap GL_n(\da_{X, 02}) $ and
$ P_{n_1, n_2} (\da_X) \cap GL_n(\da_{X, 01})$ act on these sequences.

Besides, concerning the central extension $\widehat{\da_X^* \times \da_X^*}$, we note that for any integer $l$ such that $1 \le l \le n $
 if an element  $a$ is from $ \da_{X, ij}^*$ with $ij$ equal to $12$ or $02$ or $01$, then the element $\phi_l(a) \in \widehat{GL_n(\da_{X})}$ (see its definition before Proposition~\ref{ext-restr}) equals to a  section
over the element $\diag(1, \ldots, a, \ldots, 1)$, where this section is constructed in statements~\ref{st1}, \ref{st2} or \ref{st3} correspondingly,
and $a$ is located on the $l$-th place of the diagonal in $\diag(1, \ldots, a \ldots, 1)$.
This is because the matrix of transposition of coordinates which was used to construct $\phi_l(a)$ belongs to any of the groups: $GL_n (\da_{X,12})$,
$GL_n(\da_{X, 02})$ and $GL_n(\da_{X, 01})$. Hence, the map $\diag(f, \ldots, g, \ldots, 1) \mapsto \phi_1(f) \phi_{n_1+1}(g)$
equals to a section from the proof of statements~\ref{st1}, \ref{st2} and~\ref{st3} when $f,g  \in \da_{X, ij}^*$, where $ij$  equal to $12$ or $02$
or $01$ correspondingly (compare with the proof of the second statement of Proposition~\ref{ext-restr}).
\end{proof}

\begin{nt}{\em
For a smooth projective surface $X$ we have $\da_{X, 02}  \cap \da_{X, 01} = k(X)$, see~\cite[Th.~6(iii)]{BG} (after T.~Fimmel and A.N.~Parshin). Therefore we obtain \linebreak ${GL_n(k(X))= GL_n(\da_{X,02})  \cap GL_n(\da_{X, 01})  }$. Hence we can reformulate the statement~\ref{st4} of
Proposition~\ref{split-ext} in the following way:
splittings of the central extension
 $\widehat{GL_n(\da_{X}) }$ from statements~\ref{st1}-\ref{st3}  coincide over the intersections of corresponding subgroups.
 }
\end{nt}

\subsection{Second Chern number} \label{Chern-sect}

Now we give a construction of the second Chern number for a vector bundle on a smooth algebraic surface $X$ over $k$.

 Let $\mathcal E$ be a locally free sheaf of $\oo_X$-modules of rank $n$ on $X$.
Follow~\cite{Par} we introduce transition matrices for $\mathcal E$. For any point $x \in X$ let $e_x$ be a basis of the free $\hat{\oo}_{x,X}$-module
 ${\mathcal E} \otimes _{\oo_X} \hat{\oo}_{x,X}$. For any irreducible curve $C$ on $X$ let $e_C$ be a basis of the free $\oo_{K_C}$-module ${\mathcal E} \otimes_{\oo_X} \oo_{K_C} $, where $\oo_{K_C}$ is the discrete valuation ring of the field $K_C$.   Let $e_0$ be a a basis of the free $k(X)$-module ${\mathcal E} \otimes_{\oo_X}
 k(X) $. These expressions can be considered as completions of the stalks of $\mathcal E$ at scheme points of $X$.  Each of above basis consists of $n$ elements.

 For any point $x \in X$ we have the transition matrix $\alpha_x \in GL_n(K_x)$ defined as $ e_0= \alpha_x e_x$.
 For any irreducible curve $C$ on $X$ we have the transition matrix $\alpha_C \in GL_n(K_C)$ defined as $e_0= \alpha_C e_C$.
 For any pair $x \in C$ we have the transition matrix $\alpha_{x,C} \in GL_n(\oo_{K_{x,C}})$ defined as $e_x = \alpha_{x,C} e_C$.

 When we vary points $x \in X$, we obtain the  matrix $\alpha_{02} = \{ \alpha_x  \} \in GL_n(\prod_{x \in C} K_{x,C})$ via the diagonal embedding.
When we vary irreducible curves $C$ on $X$, we obtain the matrix $\alpha_{01} = \{ \alpha_C \}  \in GL_n(\prod_{x \in C} K_{x,C})$ via the diagonal embedding.
When we vary pairs $x \in C$, we  obtain the matrix $\alpha_{21}= \{ \alpha_{x,C} \}   \in GL_n(\prod_{x \in C} K_{x,C}) $. We define $\alpha_{20}
= \alpha_{02}^{-1}$, $\alpha_{10}= \alpha_{01}^{-1}$, $\alpha_{12}= \alpha_{21}^{-1}$. We have an evident equality:
\begin{equation}   \label{cocycle}
\alpha_{02}  \alpha_{21}  \alpha_{10} = 1 \mbox{,}
\end{equation}
where $1$ is the identity matrix.

If we change the basis:
$$
\{ e_x  \}  \longmapsto \alpha_2 \{e_x\}  \; \mbox{,} \qquad \{ e_C \}  \longmapsto \alpha_1  \{ e_C \}    \; \mbox{,}  \qquad e_0 \longmapsto
\alpha_0  e_0   \; \mbox{,}
$$
where $\alpha_2 \in GL_n(\prod_{x \in X} \hat{\oo}_{x,X}) = GL_n(\da_{X,02} \cap \da_{X, 12}) $,
$\alpha_1 \in GL_n(\prod_{C \subset X} \oo_{K_{C}} ) = GL_n(\da_{X, 01}  \cap \da_{X, 12})$, and $\alpha_0 \in GL_n (k(X))$, then we obtain the change of matrices:
\begin{equation}  \label{change}
\alpha_{02} \longmapsto \alpha_0 \alpha_{02} \alpha_2^{-1} \; \mbox{,} \qquad
\alpha_{21} \longmapsto \alpha_2 \alpha_{21} \alpha_1^{-1}  \; \mbox{,}  \qquad
\alpha_{10}   \longmapsto \alpha_1 \alpha_{10}  \alpha_0^{-1}  \mbox{.}
 \end{equation}

It is easy to see that $\alpha_{01}$ and $\alpha_{02}$ are from $GL_n (\da_X)$, because by formula~\eqref{change} we can change the basis $e_0$, $\{ e_x \}$ and $\{ e_C \}$ to a more convenient basis, for example, to take a trivialization of $E$ on some open cover of $X$ in Zariski topology, and then $e_0$ equal to the trivialization of $\mathcal E$ on a fixed open subset from this open cover, and $e_x$, $e_C$ also come from the trivialization of $\mathcal E$ on this open cover of $X$. Hence and using formula~\eqref{cocycle} we obtain that $\alpha_{21} \in GL_n(\da_{X})$. Therefore we have that
$$
\alpha_{02} \in GL_n(\da_{X, 02}) \; \mbox{,} \qquad
\alpha_{21}  \in GL_n(\da_{X, 12}) \; \mbox{,}  \qquad
\alpha_{10}   \in GL_n(\da_{X, 01})  \mbox{. }
$$

Let $\widehat{\alpha_{02}} \in \widehat{GL_n(\da_{X}) }$ be the canonical section applied to $\alpha_{02}$ and  which was constructed in statement~\ref{st2}  of Proposition~\ref{split-ext}.
Let $\widehat{\alpha_{21}} \in \widehat{GL_n(\da_{X}) }$ be the canonical section applied to $\alpha_{21}$ and  which was constructed in statement~\ref{st1}  of Proposition~\ref{split-ext}.
Let $\widehat{\alpha_{02}} \in \widehat{GL_n(\da_{X}) }$ be the canonical section applied to $\alpha_{02}$ and  which was constructed in statement~\ref{st3}  of Proposition~\ref{split-ext}.

\begin{Th}
\label{Chern}
 Let $\mathcal E$ be a locally free  sheaf of $\oo_X$-modules of rank $n$ on a smooth projective surface $X$ over a perfect field $k$.
\begin{enumerate}
\item An expression $\widehat{\alpha_{02}}  \, \widehat{\alpha_{21}} \, \widehat{\alpha_{10}} \in \widehat{GL_n(\da_{X}) }$ gives an element from $\Z$ and does not depend on the choose of  basis $e_0$, $\{ e_x \}$ and $\{ e_C \}$ of $\mathcal E$.
\item An expression $\widehat{\alpha_{02}} \, \widehat{\alpha_{21}} \, \widehat{\alpha_{10}} $ equals to the second Chern number $c_2({\mathcal E})$
of $\mathcal E$.
\end{enumerate}
\end{Th}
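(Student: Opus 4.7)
Applying $\theta$ to $\widehat{\alpha_{02}}\widehat{\alpha_{21}}\widehat{\alpha_{10}}$ gives $\alpha_{02}\alpha_{21}\alpha_{10}=1$ by~\eqref{cocycle}, so the product lies in $\ker\theta=\Z$. For basis independence I would substitute the transformed matrices from~\eqref{change} into the expression. Since $\alpha_0\in GL_n(k(X))$, $\alpha_2\in GL_n(\da_{X,12}\cap\da_{X,02})$ and $\alpha_1\in GL_n(\da_{X,12}\cap\da_{X,01})$, the three products $\alpha_0\alpha_{02}\alpha_2^{-1}$, $\alpha_2\alpha_{21}\alpha_1^{-1}$, $\alpha_1\alpha_{10}\alpha_0^{-1}$ still lie respectively in $GL_n(\da_{X,02})$, $GL_n(\da_{X,12})$, $GL_n(\da_{X,01})$, and because the canonical section on each of these subgroups is a homomorphism, their lifts factor into products of section values. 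By statement~\ref{st4} of Proposition~\ref{split-ext}, the inner pairs $\widehat{\alpha_2^{-1}}\widehat{\alpha_2}$ and $\widehat{\alpha_1^{-1}}\widehat{\alpha_1}$ cancel, because each such element lives in the intersection where the two adjacent canonical sections coincide. The remaining outer factors $\widehat{\alpha_0}$ and $\widehat{\alpha_0^{-1}}$ only conjugate $\widehat{\alpha_{02}}\widehat{\alpha_{21}}\widehat{\alpha_{10}}\in\Z$, which is central, and so also cancel.

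\textbf{Part 2, strategy.} I would use an adelic splitting principle to reduce to the case where $\mathcal E$ admits a complete flag of subbundles with line-bundle quotients $L_1,\ldots,L_n$: pull back along the complete flag bundle $\pi\colon Y\to X$ and exploit naturality of the adelic expression together with $\pi_*\pi^*c_2=c_2$. Choosing $e_0$, $\{e_x\}$, $\{e_C\}$ compatibly with the flag places all three transition matrices in the full upper-triangular parabolic $P\subset GL_n(\da_X)$. Iterating statement~2 of Proposition~\ref{ext-restr}, the restriction of $\widehat{GL_n(\da_X)}$ to $P$ is the Baer sum of $n$ pullbacks of $\widehat{GL_1(\da_X)}$ (one per diagonal block) together with one pullback of $\widehat{\da_X^*\times\da_X^*}$ per pair $i<j$ via the corresponding two determinants. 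By statement~\ref{st6} of Proposition~\ref{split-ext}, the three canonical sections of Proposition~\ref{split-ext}(\ref{st1})--(\ref{st3}) respect this Baer sum decomposition.

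\textbf{Part 2, computation and main obstacle.} Substituting the Baer sum decomposition into $\widehat{\alpha_{02}}\widehat{\alpha_{21}}\widehat{\alpha_{10}}$ expresses the integer as a sum of the line-bundle expressions for each $L_i$ plus one cross-term per pair $i<j$ computed in $\widehat{\da_X^*\times\da_X^*}$ on the cocycle entries of $L_i$ and $L_j$. By the multiplication law~\eqref{double} and by statement~\ref{st5} of Proposition~\ref{split-ext}, which trivializes this auxiliary extension on each of the three coordinate subgroups, the $(i,j)$-th cross-term collapses to a single pairing $\langle\{j^{D_i}_x\},\{j^{D_j}_C\}\rangle_X$, which by Proposition~\ref{intersect} equals $-(D_i,D_j)$, where $D_i$ is a divisor representing $c_1(L_i)$. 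The per-$L_i$ contributions vanish: this is the $n=1$ base case of the theorem itself, which follows from the same three-section coincidence argument as in Part~1 applied to $\widetilde{GL_1(\da_X)}$. Summing the cross-terms (with a global sign correction coming from $\alpha_{10}$ being the inverse cocycle) yields $\sum_{i<j}c_1(L_i)\cdot c_1(L_j)=c_2(\mathcal E)$, matching the Whitney formula. The main obstacle is the sign and index bookkeeping in the iterated Baer sum needed to identify each cross-term with the intersection pairing $(D_i,D_j)$ with the correct global sign; the splitting-principle reduction and the naturality of the adelic construction under $\pi$ are routine but also require checking.
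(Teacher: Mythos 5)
Your Part 1 is correct and coincides with the paper's argument: the product lies in $\ker\theta=\Z$ by the cocycle relation~\eqref{cocycle}, and basis independence is exactly the combination of formula~\eqref{change} with statement~\ref{st4} of Proposition~\ref{split-ext}; your explicit cancellation of the inner factors (via coincidence of adjacent sections on the intersections of the subrings) and of the outer $\alpha_0$-factors (conjugation of a central element) is a faithful expansion of what the paper states in one line.

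Part 2 has a genuine gap at the very first step. The expression $\widehat{\alpha_{02}}\,\widehat{\alpha_{21}}\,\widehat{\alpha_{10}}$ is only defined for a smooth projective \emph{surface}: the ring $\da_X$, the $\Z$-torsors of dimension theories, the central extensions $\widetilde{GL_n(\da_X)}$, $\widehat{GL_n(\da_X)}$ and the canonical splittings of Proposition~\ref{split-ext} are all two-dimensional constructions. The complete flag bundle $\pi\colon Y\to X$ has dimension $2+n(n-1)/2>2$ for $n\ge 2$, so there is no adelic expression on $Y$ to compare with, and ``naturality of the adelic expression under $\pi$'' together with $\pi_*\pi^*c_2=c_2$ cannot even be formulated inside the paper's framework. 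The paper avoids this by staying within surfaces: it characterizes $c_2$ axiomatically by three properties --- vanishing on line bundles, invariance under pullback along a blow-up of a point, and the Whitney-type formula $\tilde c_2(\mathcal E_2)=\tilde c_2(\mathcal E_1)+\tilde c_2(\mathcal E_3)+(\det\mathcal E_1,\det\mathcal E_3)$ for a short exact sequence --- the point being that after a chain of blow-ups of points (which are again smooth surfaces) every bundle acquires a locally free subsheaf with locally free quotient, so these three properties determine $\tilde c_2$ by induction on the rank. Your proposal omits the blow-up invariance entirely, and it is an essential ingredient of this reduction; the paper proves it using the decomposition $\da_Y=\da_X\times\da_{\Delta}$ for a blow-up and a basis coming from a trivialization near the blown-up point. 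On the positive side, your computation of the cross-terms via the second statement of Proposition~\ref{ext-restr}, statements~\ref{st5}--\ref{st6} of Proposition~\ref{split-ext} and Proposition~\ref{intersect} does match the paper's verification of the Whitney formula (the paper treats a single two-step filtration rather than a full flag, which suffices for the induction), and your rank-one case is correct since $\widehat{GL_1(\da_X)}=\Z\times\da_X^*$. But without replacing the flag-bundle reduction by the blow-up reduction (or some other device producing a filtration without leaving the surface), the proof is incomplete.
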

\begin{proof}

1. Since the image of  $\widehat{\alpha_{02}}  \, \widehat{\alpha_{21}} \, \widehat{\alpha_{10}} $ in $GL_n(\da_X)$
equals to $\alpha_{02}  \alpha_{21}  \alpha_{10} = 1$, we obtain that   $\widehat{\alpha_{02}}  \, \widehat{\alpha_{21}} \, \widehat{\alpha_{10}} \in \Z$. The independence on the choice of basis follows from formula~\eqref{change} and the statement~\ref{st4} of Proposition~\ref{split-ext}.

2. It is known that for any locally free sheaf $\mathcal F$ of $\oo_X$-modules of rank more than $1$ there is a smooth surface $Y$ and a morphism $f : Y \to X$,
where $Y$ is obtained by means of chain of of blow-ups of points, such that there is a locally free subsheaf ${\mathcal F_1} \subset f^* {\mathcal F}$ with $(f^* {\mathcal F}) / {\mathcal F_1}$
is again a locally free subsheaf of $\oo_Y$-modules. (Indeed, it is enough to find a section $s \in H^0(Y, f^* {\mathcal E})$ such that $s(y)\ne 0$ for any point $y \in Y$, then   the sheaf  ${\mathcal M} = (f^* {\mathcal E}) / (\oo_{Y} \cdot s)$ is locally free, because $\mathop{\rm Tor}_1^{\oo_{y,Y}}(k(y), {\mathcal M}_y)=0$ for any point $y \in Y$.)

Therefore from the general theory of the Chern classes it follows that the number $\tilde{c}_2({\mathcal E})$ coincides with the second Chern number $c_2({\mathcal E})$
on a smooth projective algebraic surface
if and only if the following conditions are satisfied:

1) $\tilde{c}_2({\mathcal L})=0$ for any locally free sheaf $ \mathcal L$ of rank $1$;

2) $\tilde{c}_2({\mathcal N})= \tilde{c}_2(\pi^*( {\mathcal N}))$, where $\mathcal N$ is a locally free sheaf and $\pi$ is a blow-up of a point;

3) for any exact sequence of locally free sheaves
\begin{equation} \label{ex-triple}
0 \lrto {\mathcal E}_1 \lrto {\mathcal E}_2 \lrto {\mathcal E}_3 \lrto 0
\end{equation}
we have $\tilde{c}_2({\mathcal E}_2) = \tilde{c}_2({\mathcal E}_1)   + \tilde{c}_2({\mathcal E}_3)  + (\det({\mathcal E}_1),
\det({\mathcal E}_3))$, where $( \cdot ,  \cdot)$ is the intersection index of two divisors which are rational sections of corresponding invertible sheaves.

In our case $\tilde{c}_2$ equals to $\widehat{\alpha_{02}} \, \widehat{\alpha_{21}} \, \widehat{\alpha_{10}} $. The first condition is satisfied, because, by construction, $\widehat{GL_1(\da_{X}) }= \Z \times \da_{X}^*$.

To check the second condition we note that if $\pi : Y \to X$ is a blow-up of a point $x \in X$, then $\da_{Y}= \da_X \times \da_{\Delta}$ with the set $\Delta$ which consists of all pairs $y \in R$, where $\pi(R)=x$.
By the first statement of this theorem, $\tilde{c}_2$ does not depend on the choice of the basis. Therefore we choose the special basis. We fix a trivialization of $\mathcal E$ on an open neighbourhood of $x$ on $X$. This trivialization gives us the same basis $e_0$, $e_x$, $e_R$ and $e_y$, where $y \in R$, for $\mathcal E$ and $\pi^* {\mathcal E}$.
  We identify the other basis for $\mathcal E$ and $\pi^* {\mathcal E}$.
The decomposition ${GL_n(\da_{Y})= GL_n(\da_X) \times GL_n(\da_{\Delta})}$ implies the canonical embedding
${
\gamma : GL_n(\da_X)  \hookrightarrow GL_n(\da_Y)
}
$.
From construction of the central extension we have canonical isomorphism  (compare also with Remark~\ref{dec-hat}):
$$ \delta: \quad  \widehat{GL_n(\da_X)}  \lrto \gamma^* (\widehat{GL_n(\da_Y)}  \mbox{.}
$$
It is easy to see that from our choice of the basis for $\mathcal E$ and $\pi^* {\mathcal E}$  we have
$$\gamma(\alpha_{02, {\mathcal E}})= \alpha_{02, \pi^* {\mathcal E}}  \; \mbox{,} \qquad
\gamma(\alpha_{21, {\mathcal E}})= \alpha_{21, \pi^* {\mathcal E}} \; \mbox{,} \qquad
\gamma(\alpha_{10, {\mathcal E}})= \alpha_{10, \pi^*  {\mathcal E}} \mbox{,}$$
where we put an additional index $\mathcal E$ or $\pi^* {\mathcal E}$ to specify a locally free sheaf.
 Besides, from the construction of the central extension and the splittings we obtain
 $$
 \delta(\widehat{\alpha_{02, {\mathcal E}} })= \widehat{\alpha_{02, \pi^* {\mathcal E}}} \; \mbox{,} \qquad
\delta(\widehat{\alpha_{21, {\mathcal E}} })= \widehat{\alpha_{21, \pi^* {\mathcal E})}} \;  \mbox{,}  \qquad
\delta(\widehat{\alpha_{10, {\mathcal E}} })= \widehat{\alpha_{10, \pi^* {\mathcal E}}} \mbox{,}$$
where we consider elements $\widehat{\alpha_{02, \pi^* {\mathcal E}}}$, $ \widehat{\alpha_{21, \pi^*  {\mathcal E}}}$ and  $\widehat{\alpha_{10, \pi^* {\mathcal E}}} $
as elements from the group $\gamma^* (\widehat{GL_n(\da_Y)} $.
This finishes the checking of the second condition.

Since we can take the basis compatible with exact sequence~\eqref{ex-triple} and $\tilde{c}_2$ does not depend on the choice of the basis, the third condition for $\tilde{c}_2$ follows from statements~\ref{st5}  and~\ref{st6} of Proposition~\ref{split-ext}, the second statement of Proposition~\ref{ext-restr} and the following fact.  Let $\mC$ and $\mD$ be invertible sheaves on a smooth projective surface $X$, and $\alpha_{02, {\mathcal \mC}} $,  $\alpha_{21, {\mathcal \mC}} $,  $\alpha_{10, {\mathcal \mC}} $,
 $\alpha_{02, {\mathcal \mD}} $,  $\alpha_{21, {\mathcal \mD}} $,  $\alpha_{10, {\mathcal \mD}} $ be transition matrices, in fact elements from $\da_{X}^*$, for sheaves $\mC$ and $\mD$
 correspondingly (after the choice of basis for these sheaves).  Then there is an equality in the group
 $\widehat{\da_X^* \times \da_X^*}$:
 \begin{equation}  \label{equal}
 (\alpha_{02, \mC}, \alpha_{02,\mD}; 0) (\alpha_{21, \mC}, \alpha_{21, \mD}; 0) (\alpha_{10, \mC}, \alpha_{10, \mD}; 0)= (\mC,  \mD )  \; \in \; \Z  \mbox{.}
 \end{equation}
 We prove this equality now. The left hand side of expression~\eqref{equal} equals to
 \begin{multline*}
 (\alpha_{02, \mC } \cdot \alpha_{21, \mC}, \, \alpha_{02, \mD}  \cdot \alpha_{21, \mD}; \, \langle \alpha_{21, \mC}, \alpha_{02, \mD}   \rangle_X)
 \cdot
 (\alpha_{10, \mC}, \alpha_{10, \mD}; 0) = \\ = (\alpha_{01, \mC},\alpha_{01, \mD},  \langle \alpha_{21, \mC}, \alpha_{02, \mD}   \rangle_X) \cdot
(\alpha_{10, \mC}, \alpha_{10, \mD}; 0) =
(1,1;  \langle \alpha_{21, \mC}, \alpha_{02, \mD}   \rangle_X) + \langle \alpha_{10, \mC},  \alpha_{01, \mD}  \rangle_X)
 \end{multline*}
Since by statements~\ref{st1}-\ref{st3} of Proposition~\ref{split-ext} the central extension $\widetilde{GL_1(\da_X)}$ canonically splits over subgroups $\da_{X, 12}^*$, $\da_{X, 01}^*$ and $\da_{X, 02}^*$, we have $\langle \alpha_{10, \mC},  \alpha_{01, \mD}  \rangle_X =0$. Therefore it is enough to prove that $\langle \alpha_{21, \mC}, \alpha_{02, \mD}   \rangle_X= (\mC, \mD)$.  We have
\begin{multline*}
\langle \alpha_{21, \mC}, \alpha_{02, \mD}   \rangle_X = \langle \alpha_{02, \mD},  \alpha_{12, \mC}  \rangle_X =
 \langle \alpha_{02, \mD},  \alpha_{12, \mC}  \rangle_X + \langle  \alpha_{21, \mD}, \alpha_{12, \mC}      \rangle_X =
 \langle \alpha_{01, \mD}, \alpha_{12, \mC}      \rangle_X  =\\
 =  \langle \alpha_{01, \mD}, \alpha_{01, \mC}      \rangle_X    + \langle \alpha_{01, \mD}, \alpha_{12, \mC}      \rangle_X =
  \langle \alpha_{01, \mD}, \alpha_{02, \mC} \rangle_X = (\mC, \mD)  \mbox{,}
 \end{multline*}
 where the last equality follows from Proposition~\ref{intersect}.

 Thus we have proved the theorem.
\end{proof}

\section{Case of arithmetic surface}   \label{arsur}

Now we will give analogs of certain statements of Proposition~\ref{split-ext} for arithmetic surfaces. We note that Proposition~\ref{split-ext}
is one of key propositions used in Theorem~\ref{Chern}.

By an arithmetic surface we mean here
a two-dimensional integral regular scheme of finite type over $\Z$ with the proper surjective morphism to $\Spec \Z$. For an arithmetic surface $X$
there is an adelic arithmetic ring $\da_{X}^{\rm ar}$ introduced in~\cite[Example~11]{OsipPar2} (see also explanations in~\cite[\S~3.4]{Osi2}):
$$
\da_{X}^{\rm ar }  \eqdef \da_X  \times \da_{X, \infty}  \; \mbox{,}
$$
where the ring
$$\da_{X, \infty}  \eqdef \da_{X_{\Q}} \, \widehat{\otimes} \, \R \;
=
 \mathop{\Lim_{\lrto}}_{D_2} \mathop{\Lim_{\longleftarrow}}_{D_1 \ge D_2}  (\da_{X_{\Q}}(D_2)/ \da_{X_{\Q}}(D_1)) \otimes_{\Q}   \R
 \mbox{,}$$
and $\da_{X_{\Q}}$ is the adelic ring of the curve  ${X_{\Q} = X \times_{\Spec \Z} \Spec \Q }$, which is the generic  fibre,
$D_1$ and $D_2$ are divisors on the curve $X_{\Q}$.

The central extensions $\widetilde{GL_n(\da^{\rm ar}_X)}_{\dr_+^*}$ and $\widehat{GL_n(\da^{\rm ar}_{X})}_{\dr_+^*}$ of the group
$GL_n(\da_X^{\rm ar})$ by the (multiplicative) group of positive real numbers $\dr_+^*$ were constructed in~\cite{Osi2}.
The constructions of these central extensions can be done similarly to constructions of central extensions  $\widetilde{GL_n(\da_{\Delta})}$ and $\widehat{GL_n(\da_{\Delta}) }$ from sections~\ref{centr-ext1} and~\ref{centr-ext2}, but we have to use that $\da_X^{\rm ar}$ is an object of the category $C_2^{\rm ar}$ from~\cite[\S~5]{OsipPar2} (instead of the category $C_2$ which we used before). Correspondingly, instead of locally linearly compact $k$-vector spaces we have to use locally compact Abelian groups, and instead of $\Z$-torsor of dimension theories $\Dim(V)$ for a locally linearly compact $k$-vector space $V$ we have to use $\dr_+^*$-torsor $\mu(W)$ of Haar measures for a locally compact Abelian group $W$.

We note that the similar construction can be done  for an algebraic surface over a finite field $\df_q$. In this case a locally linearly compact $\df_q$-vector space $V$ is also an Abelian locally compact group. A homomorphism $\Z  \to \dr_+^* : a \mapsto q^{a}$ induces the map $\Dim(V) \to \mu(V)$
of corresponding torsors. This gives the homomorphism from the central extensions  $\widetilde{GL_n(\da_{X})}$ and $\widehat{GL_n(\da_{X}) }$ to the central extensions  $\widetilde{GL_n(\da^{\rm ar}_X)}_{\dr_+^*}$ and $\widehat{GL_n(\da^{\rm ar}_{X})}_{\dr_+^*}$ correspondingly.

Similarly to Lemma~\ref{lem:Baer}  we have that the cental extension  $\widetilde{GL_n(\da^{\rm ar}_X)}_{\dr_+^*}$  is the Baer sum of the central extensions  $\widetilde{GL_n(\da_X)}_{\dr_+^*}$ and
$\widetilde{GL_n(\da_{X, \infty})}_{\dr_+^*}$, where the last two central extensions are obtained by restrictions
of the central extension $\widetilde{GL_n(\da^{\rm ar}_X)}_{\dr_+^*}$
to subgroups $GL_n(\da_X)$ and $GL_n(\da_{X, \infty})$ of the group $GL_n(\da_X^{\rm ar})$. Analogously to this statement and similarly to  Remark~\ref{dec-hat},
the central extension $\widehat{GL_n(\da^{\rm ar}_{X})}_{\dr_+^*}$  is the Baer sum of the central extensions  $\widehat{GL_n(\da_X)}_{\dr_+^*}$ and
$\widehat{GL_n(\da_{X, \infty})}_{\dr_+^*}$.

We have a subring $\da_{X, \infty}(0) = \mathop{\Lim\limits_{\longleftarrow}}\limits_{D \le 0}  (\da_{X_{\Q}}(0)/ \da_{X_{\Q}}(D)) \otimes_{\Q}   \R$  of the ring $\da_{X, \infty}$, where $0$ is zero divisor on $X_{\Q}$, and $D $ is a divisor on $X_{\Q}$ which is less or equal than $0$.

The ring $\da_{X, 02}$ (with the definition as in formula~\eqref{subrings}) is a subring of the ring  $\da_{X}$. Besides, the restrictions of the central extensions  $\widetilde{GL_n(\da^{\rm ar}_{X})}_{\dr_+^*}$ and $\widehat{GL_n(\da^{\rm ar}_{X})}_{\dr_+^*}$ to the subgroup $GL_n(\da_{X, 02})$ embedded to the group $GL_n(\da_X^{ar})$ as $g \mapsto g \times 1$
coincide (or canonically isomorphic) with the restrictions of the  central extensions    $\widetilde{GL_n(\da_{X})}_{\dr_+^*}$ and $\widehat{GL_n(\da_{X})}_{\dr_+^*}$ to the subgroup $GL_n(\da_{X, 02})$ of the group $GL_n(\da_X)$.

The ring $\da_{X, 01}$ (with the definition as in formula~\eqref{subrings}) is a subring of the ring $\da_X$. Besides, there is a homomorphism from the ring $\da_{X, 01}$ to the ring $\da_{X}^{\rm ar}$ induced by the natural embedding $K_C \hookrightarrow \da_{X, \infty}$ for any ``horizontal''
curve $C$ on $X$ (or, in other words, $C$ is an integral one dimensional subscheme of $X$ which maps surjectively onto $\Spec \Z$), see more explanations in~\cite[\S~3.4-\S~3.5]{Osi2}. Thus we consider $\da_{X, 01}$ as a subring of $\da_{X}^{\rm ar}$, where $\da_{X, 01}$ is mapped in the both parts of $\da_{X}^{\rm ar}$. The last embedding induces the embedding
$GL_n(\da_{X, 01}) \hookrightarrow GL_n(\da_X^{\rm ar})$.

We obtain the following proposition, which contains analogs of statements~\ref{st1}-\ref{st3}  from Proposition~\ref{split-ext}
and generalizes Theorem~1 from~\cite{Osi2}.
\begin{prop} \label{arithm}
Let $X$ be an arithmetic surface.
 The central extensions  $\widetilde{GL_n(\da^{\rm ar}_{X})}_{\dr_+^*}$ and $\widehat{GL_n(\da^{\rm ar}_{X})}_{\dr_+^*}$ canonically split over the subgroups $GL_n(\da_{X, 12}) \times GL_n(\da_{X, \infty}(0))$, $GL_n(\da_{X, 02})$ and $GL_n(\da_{X, 01})$ of the group $GL_n(\da_{X}^{\rm ar})$.
\end{prop}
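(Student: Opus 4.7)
The plan is to mirror the proof of Proposition~\ref{split-ext}, working with $\dr_+^*$-torsors of Haar measures $\mu(W)$ in place of $\Z$-torsors of dimension theories, and using the two structural ingredients already available in the text. First, just as the splittings of $\widehat{GL_n(\da_{\Delta})}$ in the proof of Proposition~\ref{split-ext} were deduced from the splittings of $\widetilde{GL_n(\da_{\Delta})}$ by the semidirect-product argument (conjugation by canonical lifts of elements of $\da_{\Delta}^*$ preserves the canonical section over $SL_n$), the same reasoning reduces the splittings of $\widehat{GL_n(\da^{\rm ar}_X)}_{\dr_+^*}$ to the corresponding splittings of $\widetilde{GL_n(\da^{\rm ar}_X)}_{\dr_+^*}$. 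Second, by the Baer sum decomposition of $\widetilde{GL_n(\da^{\rm ar}_X)}_{\dr_+^*}$ recalled in the text, it suffices to produce splittings of each of $\widetilde{GL_n(\da_X)}_{\dr_+^*}$ and $\widetilde{GL_n(\da_{X,\infty})}_{\dr_+^*}$ over the two projections of each subgroup and to check that their sum is a group homomorphism.

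For the subgroup $GL_n(\da_{X,12}) \times GL_n(\da_{X,\infty}(0))$ the two projections are $GL_n(\oo_{\da_X})$ and $GL_n(\da_{X,\infty}(0))$, and each preserves a canonical compact open subgroup, namely $\oo_{\da_X}^n \subset \da_X^n$ and $\da_{X,\infty}(0)^n \subset \da_{X,\infty}^n$; thus both factors split exactly as in statement~\ref{st1} of Proposition~\ref{split-ext}, with the Haar measure $\mu_{1,g}$ on the trivial quotient normalized to $1$. For the subgroup $GL_n(\da_{X,02})$ embedded as $g \mapsto g \times 1$, the projection to $GL_n(\da_{X,\infty})$ is trivial and splits trivially, and the projection to $GL_n(\da_X)$ is handled by the arithmetic analog of statement~\ref{st2}: for any $g_1 \oo_{\da_X}^n \subset g_2 \oo_{\da_X}^n$, the subgroup $(g_2 \oo_{\da_X}^n \cap \da_{X,02}^n)/(g_1 \oo_{\da_X}^n \cap \da_{X,02}^n)$ is an open compact subgroup of the ambient locally compact abelian group $g_2\oo_{\da_X}^n/g_1\oo_{\da_X}^n$, and assigning it Haar measure $1$ singles out a canonical $\mu_{g_1,g_2}$ and hence the section $g \mapsto (g,\mu_{1,g})$.

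The main obstacle is the subgroup $GL_n(\da_{X,01})$, since the embedding $\da_{X,01} \hookrightarrow \da^{\rm ar}_X$ is non-trivial on both factors (via the inclusion $\da_{X,01} \subset \da_X$ and via the embeddings $K_C \hookrightarrow \da_{X,\infty}$ for horizontal curves $C$). On the $\da_X$-side the arithmetic analog of statement~\ref{st3} of Proposition~\ref{split-ext} applies: the subgroup $(g_2\oo_{\da_X}^n \cap \da_{X,01}^n)/(g_1\oo_{\da_X}^n \cap \da_{X,01}^n)$ is discrete with compact quotient inside $g_2\oo_{\da_X}^n/g_1\oo_{\da_X}^n$ (using the adelic complex on horizontal projective curves in $X$ and finite-dimensionality of $H^0$, $H^1$ of the structure sheaf), and via the Haar-measure analog of the isomorphism~\eqref{dim-ext} one picks the measure giving both the discrete lattice and the compact quotient measure $1$. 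On the $\da_{X,\infty}$-side, $\da_{X,01}$ embeds as a discrete cocompact subring of $\da_{X,\infty}$ via the generic fibre $X_{\Q}$ (up to a finite-dimensional correction governed by $H^0$ and $H^1$ of the structure sheaf on $X_{\Q}$, after tensoring with $\dr$), yielding a canonical Haar measure by the same recipe. The hard point is to verify that the Baer sum of these two sections is a group homomorphism, i.e.\ that the $2$-cocycle contributions from the two sides cancel; this is the analog of Theorem~1 of~\cite{Osi2} and amounts to a product formula for Haar measures on the global object $\da^{\rm ar}_X/\da_{X,01}$, ultimately reducing to the triviality of the total measure on the compact quotient associated with $k(X_{\Q}) \hookrightarrow \da_{X_{\Q}} \, \widehat{\otimes}\, \dr$ on the generic fibre. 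Once this global compatibility is established, the splitting of $\widetilde{GL_n(\da^{\rm ar}_X)}_{\dr_+^*}$ on $GL_n(\da_{X,01})$ follows, and the passage to $\widehat{GL_n(\da^{\rm ar}_X)}_{\dr_+^*}$ is automatic by the reduction in the first paragraph.
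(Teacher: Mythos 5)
Your overall strategy --- replace $\Z$-torsors of dimension theories by $\dr_+^*$-torsors of Haar measures and rerun the proof of Proposition~\ref{split-ext}, reducing the hatted extension to the tilded one by the semidirect-product argument --- is exactly what the paper does (its proof is only the remark that everything is ``completely similar'', plus the observation you also make that for $GL_n(\da_{X,02})$, embedded as $g\mapsto g\times 1$, it suffices to split $\widetilde{GL_n(\da_X)}_{\dr_+^*}$). Your treatment of the first two subgroups is correct.

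However, your treatment of $GL_n(\da_{X,01})$ contains a genuine error. You propose to construct a section separately on each Baer-sum factor and then check a cocycle cancellation, and in particular you claim that $(g_2\oo_{\da_X}^n\cap\da_{X,01}^n)/(g_1\oo_{\da_X}^n\cap\da_{X,01}^n)$ is discrete with compact quotient inside $g_2\oo_{\da_X}^n/g_1\oo_{\da_X}^n$. This fails for horizontal curves: if $C\subset X$ maps surjectively onto $\Spec\Z$, the relevant quotient of $\oo_{\da_X}(D_2)/\oo_{\da_X}(D_1)$ is built from the \emph{finite} adeles of the number field $k(C)$, in which $k(C)$ is dense rather than discrete (already $\Q$ is dense in $\A_{\Q,f}$). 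So the factor-wise Haar measure you want on the $\da_X$-side simply does not exist, and there is no ``product formula'' left to verify because one of the two inputs is undefined. This is precisely why the paper insists that $\da_{X,01}$ is embedded into \emph{both} parts of $\da_X^{\rm ar}$ (via $K_C\hookrightarrow\da_{X,\infty}$ for horizontal $C$): the correct analogue of statement~\ref{st3} works with the quotients of the full arithmetic adelic object $(\da_X^{\rm ar})^n$, where the diagonal image of $\da_{X,01}^n$ \emph{is} discrete and cocompact (discreteness and cocompactness of a number field in its full adele ring, i.e.\ the classical product-formula statement; vertical fibres, being projective curves over finite fields, are already handled by the finite part). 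The canonical Haar measure is then defined in one step on these global quotients, exactly as $d_Y\otimes d_{W/Y}$ was in the algebraic case, rather than as a Baer sum of two separately normalized measures. You should restructure the $\da_{X,01}$ case accordingly.
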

The proof of this proposition is completely similar to the proof of analogous statements of Proposition~\ref{split-ext}. We note only that  the splitting over the subgroup $GL_n(\da_{X, 02})$ is enough to prove for the central extensions $\widetilde{GL_n(\da_{X})}_{\dr_+^*}$ and $\widehat{GL_n(\da_{X})}_{\dr_+^*}$.

\vspace{0.3cm}

\noindent Steklov Mathematical Institute of Russsian Academy of Sciences \\
\noindent  ul. Gubkina 8, Moscow, 119991 Russia \\

\medskip
\noindent {\it E-mail:}  ${d}_{-} osipov@mi.ras.ru$


\begin{thebibliography}{99}

\bibitem{BSh}
Beilinson A. A.,  Schechtman V. V., {\em Determinant bundles and Virasoro algebras} Comm. Math.
Phys., 118 (1988), no. 4,  pp. 651-701.

\bibitem{BrD}
Brylinski J.-L, Deligne P., {\em Central extensions of reductive groups by $K_2$},   Publ. Math. Inst. Hautes \'{E}tudes Sci.  No. 94  (2001), pp. 5-85.

\bibitem{Bu} Budylin R.~Ya., {\em Adelic construction of the Chern class.} (Russian)  Mat. Sb.  202  (2011),  no. 11, 75-96;  translation in  Sb. Math.  202  (2011),  no. 11-12, 1637-1659.


\bibitem{BG}
Budylin R.~Ya.,   Gorchinskiy S.~O., {\em Intersections of adelic groups on a surface},
 (Russian)  Mat. Sb.  204  (2013),  no. 12, 3-14;  translation in  Sb. Math.  204  (2013),  no. 11-12, pp.~1701-1711.



\bibitem{CPT}
Chinburg T., Pappas, G., Taylor M. J., {\em Higher adeles and non-abelian Riemann-Roch} Adv. Math.  281  (2015), pp. 928-1024.


\bibitem{GO}
Gorchinskiy S. O., Osipov D. V., {\em A higher-dimensional Contou-Carrère symbol: local theory.} (Russian) Mat. Sb.  206  (2015),  no. 9, 21-98;
translation in  Sb. Math., 2015, 206 (9), 1191-1259.


\bibitem{H} Huber A., {\em On the Parshin-Beilinson Adeles for Schemes}, Abh. Math. Sem.
Univ. Hamburg, 61 (1991), pp.~249-273.

 \bibitem{K}  Kapranov M., {\em Semiinfinite symmetric powers}, e-print arXiv:math/0107089 [math.QA].

\bibitem{O0} Osipov, D. V., {\em  Adelic constructions of direct images of differentials and symbols.} (Russian)  Mat. Sb.  188  (1997),  no. 5, 59-84;  translation in  Sb. Math.  188  (1997),  no. 5, 697-723.

\bibitem{O} Osipov D. V., {\em Central extensions and reciprocity laws on algebraic surfaces}, (Russian) Mat. Sb.
196:10 (2005), pp.~111-136; translation in Sb. Math. 196:10 (2005),
pp.~1503-1527.

\bibitem{Osi} Osipov D. V., {\em $n$-dimensional local fields and adeles on $n$-dimensional
schemes},  Surveys in Contemporary Mathematics, Edited by N.~Young,
Y.~Choi; Cambridge University Press, London Mathematical Society Lecture Note Series, No. 347
(2007), pp. 131-164.


\bibitem{Osip} Osipov D., {\em Adeles on $n$-dimensional schemes and categories
$C_n$}, International Journal of Mathematics, vol. 18, no. 3 (2007), pp.
269-279.

\bibitem{Osi2}  Osipov D. V., {\em Unramified two-dimensional Langlands correspondence},
       Izvestiya RAN: Ser. Mat, 2013, 77:4, pp. 73-102;
       english translation in Izvestiya: Mathematics,
       2013, 77:4, pp. 714-741.


 \bibitem{OsipPar2} Osipov D. V., Parshin A. N.,  {\em Harmonic analisys on local fields and adelic spaces.
     II},
       Izvestiya RAN: Ser. Mat., 2011, 75:4, pp. 91-164; english translation in Izvestiya: Mathematics,
       2011, 75:4, pp. 749-814.


\bibitem{OsiPa} Osipov D.~V., Parshin A.~N., {\em Harmonic analisys  and the Riemann-Roch
       theorem},  Doklady Akademii Nauk, 2011, Vol. 441, No. 4, pp. 444-448; english translation in
       Doklady Mathematics, 2011, Vol. 84, No. 3, pp. 826-829.


\bibitem{Par}  Parshin A. N., {\em Chern classes, adeles and $L$-functions}, J. Reine Angew. Math., 341 (1983), pp. 174-192.









\end{thebibliography}
\end{document}